\journal{Journal of Computational Physics}
\newcommand{\datadir}{./Data}
\crefname{equation}{}{}
\Crefname{equation}{Equation}{Equations}
\crefname{appendix}{}{}
\DeclareMathOperator{\diag}{diag} 
\newtheorem{theorem}{Theorem}
\newdefinition{definition}{Definition}
\newdefinition{remark}{Remark}
\newdefinition{example}{Example}
\newproof{proof}{Proof}
\newenvironment{butchertableau}[2][1.25]{\array{#2}}{\endarray}
\newenvironment{smatrix}[1][1.15]{\begin{bmatrix}}{\end{bmatrix}}
\NewDocumentCommand{\eye}{g g}{\mathbf{I}\IfValueT{#1}{_{{#1} \times \IfValueTF{#2}{#2}{#1}}}}
\NewDocumentCommand{\zero}{g g}{\IfValueTF{#2}{\mathbf{0}}{0}\IfValueT{#1}{_{{#1} \IfValueT{#2}{\times {#2}}}}}
\NewDocumentCommand{\one}{g g}{\mathbbm{1}\IfValueT{#1}{_{{#1} \IfValueT{#2}{\times {#2}}}}}
\NewDocumentCommand{\unitvec}{m}{\mathbf{e}_{#1}}
\NewDocumentCommand{\R}{g g g}{\mathbb{R}^{\IfValueT{#1}{#1} \IfValueT{#2}{\times {#2}} \IfValueT{#3}{\times {#3}}}}
\NewDocumentCommand{\Cplx}{g g g}{\mathbb{C}^{\IfValueT{#1}{#1} \IfValueT{#2}{\times {#2}} \IfValueT{#3}{\times {#3}}}}
\NewDocumentCommand{\tril}{g g}{\mathbf{T}\IfValueT{#1}{_{{#1} \times \IfValueTF{#2}{#2}{#1}}}}
\NewDocumentCommand{\component}{m m m}{^{ \left\{ #1 \right\} #2 \IfBooleanT{#3}{\tp}}}
\NewDocumentCommand{\comp}{m O{} s}{\component{#1}{#2}{#3}}
\RenewDocumentCommand{\L}{O{} s}{\component{\textsc{l}}{#1}{#2}}
\NewDocumentCommand{\D}{O{} s}{\component{\textsc{d}}{#1}{#2}}
\NewDocumentCommand{\U}{O{} s}{\component{\textsc{u}}{#1}{#2}}
\NewDocumentCommand{\E}{O{} s}{\component{\textsc{e}}{#1}{#2}}
\NewDocumentCommand{\I}{O{} s}{\component{\textsc{i}}{#1}{#2}}
\newcommand{\nvar}{\mathrm{d}} 
\newcommand{\nparts}{\mathrm{N}}
\newcommand{\tp}{T}
\newcommand{\Dt}{h}
\newcommand{\A}{\boldsymbol{A}}
\renewcommand{\b}{\boldsymbol{b}}
\renewcommand{\c}{\boldsymbol{c}}
\newcommand{\Ahat}{\boldsymbol{\widehat{A}}}
\newcommand{\Atilde}{\boldsymbol{\widetilde{A}}}
\newcommand{\btilde}{\boldsymbol{\widetilde{b}}}
\newcommand{\ctilde}{\boldsymbol{\widetilde{c}}}
\pgfplotsset{compat=1.15}
\pgfplotsset{every axis/.append style={
	width=\linewidth,
	x label style={font=\scriptsize},
	y label style={font=\scriptsize},
	tick label style={font=\scriptsize},
	legend columns=1,
	legend style={at={(0.5,1.05)}, anchor=south, font=\scriptsize},
	cycle list name=mycolor,
	yminorticks=false,
	mark size=1.4pt,
	max space between ticks=25pt,
	try min ticks=3
}}
\renewcommand{\comment}[2][]{}%
\renewcommand{\added}[2][]{#2}%
\renewcommand{\deleted}[2][]{}
\newif\ifreport
\begin{document}
    \csltitle{A unified formulation of splitting-based  implicit time integration schemes} 
    \cslauthor{S. Gonz\'alez-Pinto, D. Hern\'andez-Abreu, M. S. P\'erez-Rodr\'iguez, 
        A. Sarshar, S. Roberts, A. Sandu}
    \cslyear{21}
    \cslreportnumber{3}
    \csltitlepage
	\begin{frontmatter}

		\title{A unified formulation of splitting-based \\ implicit time integration schemes}
		
		\author[lg]{Severiano Gonz\'alez-Pinto\fnref{fnSeveriano}}\ead{spinto@ull.edu.es}
		\author[lg]{Domingo Hern\'andez-Abreu\fnref{fnSeveriano}}\ead{dhabreu@ull.edu.es}
		\author[lg]{Maria S. P\'erez-Rodr\'iguez\fnref{fnSeveriano}}\ead{sperezr@ull.edu.es}
		\author[vt]{Arash Sarshar\fnref{fnSandu}}\ead{sarshar@vt.edu}
		\author[vt]{Steven Roberts\corref{corA}\fnref{fnSandu}}\ead{steven94@vt.edu} 
		\author[vt]{Adrian Sandu\fnref{fnSandu}}\ead{sandu@cs.vt.edu}
		
        \fntext[fnSeveriano]{The work of these authors has been partially supported by University of La Laguna, Canary Islands.}
        
		\cortext[corA]{Corresponding author}
		\address[lg]{Departamento de An\'alisis Matem\'atico, Universidad de La Laguna, 38200-La Laguna, Spain}
		\address[vt]{Computational Science Laboratory, Department of Computer Science, Virginia Tech, Blacksburg, VA 24060}
		\fntext[fnSandu]{The work of Sandu, Sarshar, and Roberts has been supported in part by NSF through awards NSF ACI--1709727, NSF CDS\&E--MSS 1953113, DOE ASCR DE--SC0021313, and by the Computational Science Laboratory at Virginia Tech.}
        
        \begin{abstract}
            Splitting-based time integration approaches such as fractional step, alternating direction implicit, operator splitting, and locally one dimensional methods partition the system of interest into components, and solve individual components  implicitly in a cost-effective way. 
            This work proposes a unified formulation of splitting time integration schemes in the framework  of general-structure additive Runge--Kutta (GARK) methods. Specifically, we develop implicit-implicit (IMIM) GARK schemes, provide the order conditions for this class, and explain their application to partitioned systems of ordinary differential equations.  We show that classical splitting methods belong to the IMIM GARK family, and therefore can be studied in this unified framework.  New IMIM-GARK splitting methods are developed and tested using parabolic systems. 
        \end{abstract}
		
		\begin{keyword}
			general-structure additive Runge--Kutta methods \sep alternating direction implicit \sep implicit-explicit \sep implicit-implicit methods
			
			\MSC[2020] 65L05 \sep 65L07 \sep 65L020
		\end{keyword}
		
	\end{frontmatter}

\section{Introduction}
Many applications require the solution of partitioned initial value problems 
\begin{equation} 
	\label{eqn:additive-ode}
	y'= f(t,y) = \sum_{m=1}^\nparts f\comp{m} (t,y), \qquad  y(t_0)=y_0 \in \R{\nvar},
\end{equation}
where the right-hand side function $f: \R{\nvar} \rightarrow \R{\nvar}$ is additively  split into $\nparts$ parts. The partitioning may  be informed by spatial dimensions, physical processes, stiffness, linearity,  time-scales, or computational costs. Additive partitioning also includes the special case of component partitioning where the state vector $y$ is split into disjoint sets \cite{Sandu_2015_GARK}. 

The earliest approaches to efficiently tackle partitioned problems \cref{eqn:additive-ode} involve solving the individual component systems one at a time, in an alternating fashion. This idea led to several closely connected families of schemes including fractional step methods, alternating direction implicit (ADI) methods, operator splitting methods, and locally one dimensional (LOD) methods. An early survey of fractional step methods is given in Yanenko's monograph \cite{Yanenko_1971_book}. ADI methods for the heat equation were proposed by Douglas \cite{Douglas_1955_ADI}, Douglas and Rachford \cite{Douglas_1956_ADI}, and Peaceman and Rachford \cite{Peaceman_1955_ADI}; the idea is to solve multidimensional partial differential equation via a sequence of one-dimensional problems, each aligned to a different spatial dimension. Recent theoretical results on stabilibity and convergence for this kind of methods are obtained in \cite{Gonzalez_2020_AMF,NR4, NR5}. \added[id=1]{These splittings are further explored in \cite{Tan2007LOD,Tan2008ADI,Tan2020Review} for efficient time-stepping of Maxwell's equations with special interest in reducing right-hand side function evaluations.} Bujanda and Jorge \cite{Bujanda_2003_FSRK,Bujanda_2001_FSRK} propose fractional step Runge--Kutta methods (FSRK) for solving multidimensional  parabolic PDEs, by means of linearly implicit time integration processes. They extend these methods to semi-linear parabolic problems in an IMEX/ARK fashion \cite{Bujanda_2004_FSRK,Bujanda_2007_FSRK} where a FSRK implicit method used to solve the linear non-homogeneous terms is paired with an explicit Runge--Kutta method for solving nonlinear terms.  An extension of ADI strategies for parabolic problems to general linear methods is discussed in \cite{Sandu_2019_GLM-ADI}. The traditional first order operator splitting method \cite{Yanenko_1971_book} was extended to a symmetric, second order splitting approach by Strang \cite{Strang_1968_splitting}, and to higher order splitting approaches by Yoshida \cite{Yoshida_1990_splitting}. 

The development of Runge--Kutta methods that are tailored to the partitioned system \cref{eqn:additive-ode} started with the early work of Rice \cite{Rice_1960_split-RK}. Hairer \cite{Hairer_1981_Pseries} developed the concept of P-trees and laid the foundation for the modern order conditions theory for partitioned Runge--Kutta methods. The investigation of practical partitioned Runge--Kutta methods with implicit and explicit components was revitalized by the work of Ascher, Ruuth, and Spiteri \cite{Ascher_1997_IMEX_RK}. \added[id=1]{The Additive Runge--Kutta framework \cite{Rentrop_1985,Cooper_1983_ARK} has served as the foundation for numerous later works on implicit-explicit Runge--Kutta methods \cite{Kennedy_2003_ARK,Boscarino2007IMEX-DAE,BOSCARINO2009IMEX}}. Some partitioning strategies have been discussed by Weiner et al. \cite{Weiner_1993_partitioning}. \added[id=1]{IMEX extrapolation methods \cite{Sandu_2014_IMEX-RK,Sandu_2010_extrapolatedIMEX,Sandu_2014_IMEX_GLM_Extrap,SCHNEIDER2018-IMEX-PEER-EXT} and IMEX general linear methods \cite{Sandu_2015_IMEX-TSRK,Sandu_2020_IMEX-GLM-Theory,Bras2017-IMEX-GLM,SOLEIMANI2017-IMEX-PEER,SOLEIMANI2018-IMEX-PEER,Sandu_2014_IMEX-GLM,SCHNEIDER2021-PEER-IMEX} are some  of the latest examples in this area.} Sandu and G{\"u}nther have proposed a generalized-structure for  additively partitioned Runge–Kutta (GARK) methods in \cite{Sandu_2015_GARK}.  This framework provides a unifying theory for the creation and analysis of partitioned Runge--Kutta methods in various applications such as multirate \cite{Sandu_2016_GARK-MR,Sandu_2019_MR-GARK_High-Order}, multirate infinitesimal \cite{Sandu_2019_MRI-GARK,Sandu_2020_MRI-GARK_Coupled}, differential-algebraic \cite{Sandu_2014_SCEE}, and linearly implicit \cite{Sandu_2021_GARK-ROS} integration methods; research implementations are available in \cite{Sandu_2014_FATODE,Sandu_2017_MATLODE}. An alternative approach to efficiently solving \cref{eqn:additive-ode} is to employ splitting at a linear algebra level, rather than splitting the system or its discretization; this led to approximate matrix factorization (AMF) family of schemes \cite{Beam_1976_FD,Gonzalez_2018_AMF,Gonzalez_2020_AMF,NR1,NR2,NR3,Sandu_2015_AMF-RK}.

This paper proposes a unified formulation of implicit integration schemes based on splitting (fractional step, alternating direction implicit, operator splitting, and locally one dimensional methods) in the framework of general-structure additive Runge--Kutta (GARK) methods. The framework allows for the construction of new schemes of higher classical order suited for serial and parallel partitioned integration.

The remainder of the paper is organized as follows. In \Cref{sec:imim}, practical structures for implicit integration are introduced, and \added[id=2]{\cref{sec:Linear-stability} provides insights into the stability of these methods.}  \Cref{sec:classical-adi-methods,sec:operator-splitting-methods,sec:FSRK} are dedicated to deriving GARK formulations of classical ADI schemes, operator splitting schemes and fractional step Runge--Kutta methods.  The order condition theory for implicit-implicit GARK methods is discussed in \cref{sec:new_methods}, and two new methods are derived.  Finally we present some numerical tests for the new methods in \Cref{sec:numerical_experiments}. 
\section{Implicit-implicit GARK schemes}
\label{sec:imim}

We start by considering an $\nparts$-way partitioned GARK method \cite{Sandu_2015_GARK} to solve \cref{eqn:additive-ode}: 
\begin{subequations}
    \label{eqn:GARK}
    \begin{align}
	    \label{eqn:GARK-stage}
	    Y_i\comp{q} &= y_{n} + \Dt \sum_{m=1}^\nparts \sum_{j=1}^{s\comp{m}} a\comp{q,m}_{i,j} \, f\comp{m}_j, \\
	    & \qquad q =1,\dots,\nparts, \quad i=1,\dots,s\comp{q}, \nonumber \\
	    \label{eqn:GARK-solution}
	    y_{n+1} &= y_n + \Dt \sum_{q=1}^\nparts \sum_{i=1}^{s\comp{m}} b\comp{q}_{i} \, f\comp{q}_i,
    \end{align}
\end{subequations}
where we denote
\begin{equation*}
	f\comp{m}_j \coloneqq f\comp{m}\mleft(t_n + c\comp{m}_j \, \Dt, Y\comp{m}_j\mright),\quad j=1,\ldots,s\comp{m}.
\end{equation*}
The GARK method \cref{eqn:GARK} is represented by the Butcher tableau
\begin{equation}
	\label{eqn:consistent-Butcher-tableau}
	\begin{butchertableau}{c}
		 \A \\ \hline
		 \b^\tp
	\end{butchertableau}
	~\coloneqq~
	\raisebox{24pt}{$
	\begin{butchertableau}{cccc}
		 \A\comp{1,1} & \A\comp{1,2} & \ldots & \A\comp{1,\nparts} \\
		 \A\comp{2,1} &\A\comp{2,2} & \ldots & \A\comp{2,\nparts} \\
		  \vdots & \vdots & & \vdots \\
		  \A\comp{\nparts,1} & \A\comp{\nparts,2} & \ldots & \A\comp{\nparts,\nparts} \\ \hline
		 \b\comp{1}* & \b\comp{2}* & \ldots &\b\comp{\nparts}*
	\end{butchertableau}
	$},
\end{equation}
\added[id=2]{where $\A\comp{q,m} \in \R{s\comp{q}}{s\comp{m}}$ and $\b\comp{q}, \c\comp{q} \in \R{s\comp{q}}$ for $q, m = 1, \dots, \nparts$.  Thus, $\A \in \R{\mathbf{s}}{\mathbf{s}}$ and $\b \in \R{\mathbf{s}}$, where $\mathbf{s} = \sum_{m=1}^{\nparts} s\comp{m}$.  The diagonal blocks of the tableau correspond to the ``base'' methods which are traditional Runge--Kutta schemes with coefficients $\left( \A\comp{q,q}, \b\comp{q}, \c\comp{q} \right)$.}

The GARK order conditions up to order four are given in \cref{eqn:GARK-order-conditions} following \cite{Sandu_2015_GARK}.  The GARK scheme \cref{eqn:consistent-Butcher-tableau} is internally consistent \cite{Sandu_2015_GARK} if
\comment[id=2]{Remove ambiguous $c\comp{q,m}_i$ and more precisely define int. consistency}
\begin{equation}
	\label{eqn:internal-consistency}
	\A\comp{q,1} \one{s\comp{1}} = \dots = \A\comp{q,\nparts} \one{s\comp{\nparts}} = \c\comp{q}.
\end{equation}
Internal consistency greatly simplifies the number of coupling order conditions \cref{eqn:GARK-order-conditions} \cite{Sandu_2015_GARK}.

The GARK scheme \cref{eqn:consistent-Butcher-tableau} is called stiffly accurate \cite{Sandu_2015_GARK} if its coefficients satisfy
\begin{equation}
	\label{eqn:gark-stiff-accuracy}
	\b^\tp = \unitvec{\mathbf{s}}^\tp \, \A
    \quad \Leftrightarrow \quad
    \b\comp{m}* = \unitvec{s\comp{\nparts}}^\tp \, \A\comp{\nparts,m}, \text{ for } m=1,\dots,\nparts,
\end{equation}
where $\unitvec{i} \in \R{\nparts}$ is the $i$th column of identity matrix $\eye{\nparts}$.  In cases where the dimension is ambiguous, we write this as $\unitvec{i,N}$.
Stiffly accurate GARK methods have favorable stability properties \cite{Sandu_2015_GARK} and simpler order conditions for differential algebraic equations \cite{Tanner2018}.  Note in some cases, a GARK method may not satisfy \cref{eqn:gark-stiff-accuracy}, but a permutation of stages can yield an equivalent formulation that does.   More formally, denote a permutation matrix by $\mathcal{P} \in \R{\mathbf{s}}{\mathbf{s}}$. Reordering the stage numbers accordingly leads to permuted coefficients and a rearranged Butcher tableau \cref{eqn:consistent-Butcher-tableau}
\begin{equation}
	\label{eqn:permuted-Butcher-tableau}
	\begin{butchertableau}[1.6]{c}
		 \Atilde \\
		\hline
		 \btilde^\tp
	\end{butchertableau}
	~~\coloneqq~~
	\begin{butchertableau}{c}
		\mathcal{P} \, \A \, \mathcal{P}^\tp \\
		\hline
		\b^\tp \,\mathcal{P}^\tp
	\end{butchertableau}.
\end{equation}
We are now ready to define implicit-implicit GARK schemes.
\begin{definition}[Implicit-implicit GARK schemes]
	\label{def:imim-gark}
	A GARK scheme \cref{eqn:consistent-Butcher-tableau} is implicit-implicit (IMIM-GARK) if there exists a permutation  $\mathcal{P} \in \R{\mathbf{s}}{\mathbf{s}}$ such that the tableau $\Atilde = \mathcal{P} \, \A \, \mathcal{P}^\tp$ \cref{eqn:permuted-Butcher-tableau} is lower triangular. 
\end{definition}
The calculation of the stage vectors \cref{eqn:GARK-stage}, carried out in the order defined by the permutation matrix $\mathcal{P}$, is done in an implicit-decoupled manner, i.e., any implicit stage calculation involves a single unknown stage vector $Y_i\comp{q}$. There are no nonlinear equations that require solving for multiple stage vectors simultaneously. 

\begin{remark}[Dependency graph for GARK tableau]
	For the GARK scheme \cref{eqn:consistent-Butcher-tableau}, take $\A$ to be the adjacency matrix for a weighted, directed graph $G$.  Cycles in $G$ indicate implicitness, and vertices in a cycle correspond to a stage values that must be solved simultaneously.  Therefore, the IMIM property is equivalent to all cycles in $G$ being loops (cycles with only one edge that start and end at same vertex).
\end{remark}

\subsection{Implicit-implicit GARK methods with a special coupling  structure}
\label{sec:imim-par-2}
Of special interest in this work are IMIM-GARK methods where the Butcher tableau \cref{eqn:consistent-Butcher-tableau} \added[id=2]{uses the same coefficients within the lower triangular, diagonal, and upper triangular blocks:}
\comment[id=2]{Replaced ambiguous $\b$ with $\b\D$}
\begin{equation}
\label{eqn:Butcher-tableau-structure-2}
	\begin{butchertableau}{c}
		 \A \\ \hline
		 \b^\tp
	\end{butchertableau}
	~\coloneqq~
	\raisebox{24pt}{$
	\begin{butchertableau}{cccc}
		 \A\D & \A\U & \ldots &  \A\U  \\
		 \A\L &\A\D & \ldots &  \A\U  \\
		 \vdots & \vdots & & \vdots \\
		\A\L & \A\L & \ldots & \A\D \\ \hline
		\b\D* & \b\D* & \ldots &\b\D*
	\end{butchertableau}
	$}.
\end{equation}
\added[id=2]{This structure requires $\A\U, \A\D, \A\L \in \R{s\D}{s\D}$ where $s\D \coloneqq s\comp{1} = \dots s\comp{\nparts}$.}  The blocks in \cref{eqn:Butcher-tableau-structure-2} correspond to explicit and diagonally implicit Runge--Kutta schemes, as follows:
\begin{itemize}
	\item $\A\U$ is a strictly lower triangular matrix ($a_{i,j}\U = 0$ for $i \le j$) corresponding to an explicit Runge--Kutta scheme \added[id=2]{$(\A\U,\b\D,\c\D)$};
	\item $\A\D$ is a lower triangular matrix ($a_{i,j}\D = 0$ for $i < j$) corresponding to a diagonally implicit Runge--Kutta scheme \added[id=2]{$(\A\D,\b\D,\c\D)$}; and
	\item $\A\L$ is a lower triangular matrix ($a_{i,j}\L = 0$ for $i < j$) corresponding to either an explicit or a diagonally implicit Runge--Kutta scheme \added[id=2]{$(\A\L,\b\D,\c\D)$}. 
\end{itemize}
Note that the internal consistency property \cref{eqn:internal-consistency} and the explicit nature of $\A\U$ require that $\A\D$ and $\A\L$ have an explicit first stage (e.g., correspond to ESDIRK schemes).  The stiff accuracy property \cref{eqn:gark-stiff-accuracy} is equivalent to:
\begin{equation*}
	\label{eqn:gark-stiff-accuracy-2}
	\b\D* = \unitvec{s}^\tp \, \A\L = \unitvec{s}^\tp \, \A\D.
\end{equation*}

The method \cref{eqn:Butcher-tableau-structure-2} advances the solution using the following computational process:
\begin{subequations}
	\label{eqn:IMIM-GARK}
	\begin{align}
		\nonumber
		&\texttt{for } i=1,\dots,s\D \texttt{ do:} \\
		\label{eqn:IMIM-GARK-stage}
		&\qquad
		\begin{aligned}
			Y_i\comp{q} &= y_{n} + \Dt \sum_{m < q} \sum_{j=1}^{i} {a}_{i,j}\L \, f\comp{m}_j  
			+\Dt \sum_{j=1}^i {a}_{i,j}\D \, f\comp{q}_j \\
			&\quad + \Dt \sum_{m>q} \sum_{j=1}^{i-1} {a}_{i,j}\U \, f\comp{m}_j, \quad q=1,\dots,\nparts;
		\end{aligned} \\
		%
		%
		\label{eqn:IMIM-GARK-solution}
		& y_{n+1} = y_n + \Dt \sum_{q=1}^\nparts \sum_{i=1}^{s\D} b\D_{i} \, f\comp{q}_i.
	\end{align}
\end{subequations}
The method computes the first stage $Y_1\comp{q}$ for all components in the order $q=1,\dots,\nparts$. Then the method computes \deleted[id=1]{each stage  \cref{eqn:IMIM-GARK-stage} computes} $Y_i\comp{q}$, $i \ge 2$, using the previous stages $Y_1\comp{m} \ldots Y_{i-1}\comp{m}$ for all components $m$, and the already computed  stages $Y_i\comp{1} \ldots Y_{i}\comp{q-1}$. The lower triangular structure of $\A\D$ implies that $Y_i\comp{q}$ is computed implicitly in a SDIRK-like manner when $a_{i,i}\D > 0$, and explicitly when  $a_{i,i}\D = 0$.
\ifreport %
\begin{example}[Two-way partitioned IMIM-GARK]
	Consider a method \cref{eqn:IMIM-GARK} with two stages ($s=2$) applied to an autonomous, two-way-partitioned system ($\nparts=2$). The solution is computed as follows:
	\begin{equation*}
		\begin{split}
			Y_1\comp{1} &= Y_1\comp{2} = y_{n}, \\
			\boxed{Y_2\comp{1}} &= y_{n} + \Dt \, {a}_{2,1}\D \, f\comp{1} \mleft( Y_1\comp{1} \mright) + \boxed{\Dt \,{a}_{2,2}\D \, f\comp{1} \mleft( Y_2\comp{1} \mright)} + \Dt\, {a}_{2,1}\U \, f\comp{2} \mleft( Y_1\comp{2} \mright), \\
			\boxed{Y_2\comp{2}} &= y_{n} +\Dt\,{a}_{2,1}\L \, f\comp{1} \mleft( Y_1\comp{1} \mright) +\Dt\,{a}_{2,2}\L \, f\comp{1}\mleft(Y_2\comp{1}\mright) \\
			& \quad + \Dt \,{a}_{2,1}\D \, f\comp{2} \mleft( Y_1\comp{2} \mright) +\boxed{\Dt \,{a}_{2,2}\D \, f\comp{2}\mleft(Y_2\comp{2}\mright)}, \\
			y_{n+1} &= y_n + \Dt\,b_{1} \left( f\comp{1} \mleft( Y_1\comp{1} \mright) + f\comp{2} \mleft( Y_1\comp{2} \mright) \right) \\
			& \quad + \Dt\,b_{2} \left( f\comp{1}\mleft(Y_2\comp{1}\mright) + f\comp{2}\mleft(Y_2\comp{2}\mright) \right).
		\end{split}
	\end{equation*}
	The implicit terms in each stage computation are indicated by boxes.
\end{example}
\fi
\added[id=2]{The order in which stages are evaluated in \cref{eqn:IMIM-GARK} corresponds to the permutation}
\begin{equation} \label{eqn:vec-perm_mat}
	\mathcal{P} = \sum_{i=1}^{\nparts} \unitvec{i,\nparts}^\tp \otimes \eye{s} \otimes \unitvec{i,\nparts}
	 = \sum_{i=1}^{s} \unitvec{i,s} \otimes \eye{\nparts} \otimes \unitvec{i,s}^\tp,
\end{equation}
\added[id=2]{which is known as the vec-permutation matrix \cite{henderson1981vec}.}
\ifreport %
Upon reorganizing the GARK tableau \cref{eqn:consistent-Butcher-tableau} into  \cref{eqn:permuted-Butcher-tableau}, we have the structure
\begin{subequations}
	\label{eqn:imim-permuted-Butcher}
	\begin{equation}
	\label{eqn:imim-permuted-Butcher-tableau}
		\begin{butchertableau}{c|cccc}
			\ctilde\comp{1} & \Atilde\comp{1,1} & \Atilde\comp{1,2} & \ldots & \Atilde\comp{1,s} \\
			\ctilde\comp{2} & \Atilde\comp{2,1} & \Atilde\comp{2,2} & \ldots & \Atilde\comp{2,s} \\
			\vdots &  \vdots & \vdots & & \vdots \\
			\ctilde\comp{s} &  \Atilde\comp{s,1} & \Atilde\comp{s,2} & \ldots & \Atilde\comp{s,s} \\ \hline
			& \btilde\comp{1}* & \btilde\comp{2}* & \ldots & \btilde\comp{s}*
		\end{butchertableau},
	\end{equation}
	with blocks $\Atilde\comp{i,j} \in \R{\nparts}{\nparts}$ and  $\btilde\comp{i}, \ctilde\comp{i} \in \R{\nparts}$ defined as
	\begin{equation}
		\label{eqn:imim-permuted-Butcher-blocks}
		\Atilde\comp{i,j} =
		\begin{smatrix}[1.3]
			a\D_{i,j} & a\U_{i,j} & \ldots & a\U_{i,j} \\
			a\L_{i,j} &a\D_{i,j} & \ldots & a\U_{i,j} \\
			\vdots & \vdots & & \vdots \\
			a\L_{i,j} & a\L_{i,j} & \ldots & a\D_{i,j} 
		\end{smatrix},
		\quad
		\btilde\comp{i} = \b_i\,\one{\nparts},
		\quad
		\ctilde\comp{i} = \c_i\,\one{\nparts}.
	\end{equation}
\end{subequations}
We see that $\Atilde\comp{i,j} = \zero{\nparts}{\nparts}$ for $j > i$, and that $\Atilde\comp{i,i}$ are lower triangular matrices, with equal diagonal entries $a\I_{i,i}$. Therefore \cref{eqn:imim-permuted-Butcher} it is an IMIM-GARK scheme according to \Cref{def:imim-gark}.
\fi

We consider two subclasses of IMIM-GARK schemes, ADI-GARK and parallel ADI-GARK, for their practical appeal.

\begin{definition}[ADI-GARK]
    \label{definition:ADI-GARk}
	An Alternating Direction  Implicit GARK (ADI-GARK) scheme is an IMIM-GARK method with the structure \cref{eqn:Butcher-tableau-structure-2} where $\A \L = \A \D = \A\I$, $\A \U = \A\E$, $\b\D = \b\I$, and $\c\D = \c\I$.
\end{definition}
We note that an ADI-GARK method is stiffly accurate \cref{eqn:gark-stiff-accuracy} iff the implicit component $(\A\I,\b\I,\c\I)$ is stiffly accurate in the Runge--Kutta sense. 

\ifreport
\begin{remark}
\label{rem:esdirk-blocks}
When the implicit base method is an ESDIRK scheme with $\A\I_{i,i} = \gamma$ for $i \ge 2$, the Butcher tableau \cref{eqn:imim-permuted-Butcher} of the corresponding ADI-GARK method is:
\begin{align*}
    \Ahat\comp{i,j} \big|_{i < j} &= \zero{\nparts}{\nparts}, \quad
    \Ahat\comp{1,1} = \zero{\nparts}{\nparts}, \quad
    \Ahat\comp{i,i}\big|_{i \ge 2} = 
    \Gamma = \begin{bmatrix}
    	\gamma & \dots & 0 \\ 
		\vdots  & \ddots & \vdots \\ \gamma  & \dots & \gamma
	\end{bmatrix}.
\end{align*}
\end{remark}
\fi

\begin{definition}[Parallel ADI-GARK]
      \label{definition:P-ADI-GARk}
A parallel ADI-GARK scheme is an IMIM-GARK method with the structure \cref{eqn:Butcher-tableau-structure-2} where $\A \L = \A \U = \A\E$, $\A\D = \A\I$, $\b\D = \b\I$, and $\c\D = \c\I$. The nonlinear systems for stages $Y_i\comp{q}$ \cref{eqn:IMIM-GARK-stage} are solved in parallel for a given $i$ for all components $q=1,\dots,\nparts$.
\end{definition}
\begin{remark} \label{rem:serial_parallel}
    Owing to their structure, the order conditions of ADI-GARK and parallel ADI-GARK schemes are much simpler than the general GARK conditions \cref{eqn:GARK-order-conditions} \cite{Sandu_2015_GARK}. These order conditions depend on the underlying implicit and explicit methods and are the same for both ADI-GARK and parallel ADI-GARK schemes; the difference between the methods is given by the way these blocks are assembled in the Butcher tableau \cref{eqn:Butcher-tableau-structure-2}.
\end{remark}

In \cref{sec:new_methods}, we provide the coefficients of new ADI-GARK and parallel ADI-GARK schemes of orders three and four.
\section{\added[id=2]{Linear stability analysis}}\comment[id=2]{This section has been added as requested by reviewer 2}
\label{sec:Linear-stability}

Consider an IMIM-GARK method \cref{eqn:Butcher-tableau-structure-2} and the corresponding permuted tableau \cref{eqn:permuted-Butcher-tableau} using the permutation matrix \cref{eqn:vec-perm_mat}.  We apply this method to the linear, partitioned system
\begin{equation}
	y' = \sum_{m=1}^{\nparts} \lambda\comp{m} \, y,
\end{equation}
where $\lambda\comp{m} \in \Cplx$.  We employ the helpful notation
\begin{equation*}
	\begin{alignedat}{2}
		\mathbf{z} & \coloneqq \begin{bmatrix}
			z\comp{1} & \ldots & z\comp{\nparts}
		\end{bmatrix}^\tp,
		\qquad &
		\mathbf{\widehat{Z}} & \coloneqq \diag\left( z\comp{1} , \dots , z\comp{\nparts} \right), \\
		\mathbf{Z}_{s} &\coloneqq \mathbf{\widehat{Z}} \otimes \eye{s}, \qquad & \mathbf{\widetilde{Z}}_{s} &\coloneqq \eye{s} \otimes \mathbf{\widehat{Z}},
	\end{alignedat}
\end{equation*}
where $z\comp{m} \coloneqq h \, \lambda\comp{m}$. From \cite{Sandu_2015_GARK}, it is known that
\begin{equation*}
	y_{n+1} = R\mleft( \mathbf{z} \mright) \, y_n,
\end{equation*}
where the stability function $R\mleft( \mathbf{z} \mright)$ can be written compactly as
\begin{equation}
	\label{eqn:stability-function}
	\begin{split}
		R\mleft( \mathbf{z} \mright) &= 1 + \b^\tp \, \mathbf{Z}_s \left( \eye{\nparts s} - \A \, \mathbf{Z}_s \right)^{-1} \one{\nparts s} \\
		&= 1 + {\b}^\tp \left( \mathbf{Z}_s^{-1} - \A \right)^{-1} \one{\nparts s} \\
		&= 1 + \btilde^\tp \left( \mathbf{\widetilde{Z}}_s^{-1} - \Atilde \right)^{-1} \one{\nparts s}.
	\end{split}
\end{equation}
For stiffly accurate methods, this simplifies to
\begin{equation*}
		R\mleft( \mathbf{z} \mright) 
		 =  \unitvec{\nparts s}^\tp   \left( \eye{\nparts s} -   \A \, \mathbf{Z}_s \right)^{-1}  \one{\nparts s},
\end{equation*}
and in the case where $\A$ is invertible, it holds that \cite{Sandu_2015_GARK}
\begin{equation} \label{eqn:stiff-acc-lim}
	\lim_{z\comp{\nparts} \to -\infty} R(\mathbf{z}) = 1 - \btilde^\tp  \Atilde^{-1} \one{\nparts s} =  1 - \b^\tp  \A^{-1} \one{\nparts s}.
\end{equation}

These results hold for the broad class of IMIM-GARK methods and the many special subclasses.  Further stability results for GARK methods applied to scalar and $2 \times 2$ linear test problems can be found in \cite{Sandu_2020_MR-GARK_Implicit}.  Notably, the decoupled nature of IMIM-GARK methods prevents them from achieving A-stability for $2 \times 2$, linear ODEs \cite[Theorem 3.4]{Sandu_2020_MR-GARK_Implicit}.

\ifreport
\subsection{Methods with explicit first stages}

Unfortunately, internal consistency is incompatible with an invertible $\A$ for IMIM-GARK methods.  Therefore, it is important to consider the methods discussed in \cref{rem:esdirk-blocks} which are based on ESDIRK schemes.  This structure admits the following inverse for the matrix appearing in \cref{eqn:stability-function}:
\begin{align*}
	& \quad \left( \mathbf{\widetilde{Z}}^{-1} - \Atilde \right)^{-1} \\
	&= 
	\begin{bmatrix} 
		\mathbf{\widehat{Z}} & \mathbf{0} \\  
		\left( \mathbf{\widetilde{Z}}^{-1}_{s-1} - \Atilde\comp{{2:\nparts,2:\nparts}} \right)^{-1}  \Atilde\comp{{2:\nparts,1}} \, \mathbf{\widehat{Z}} & 
		\left( \mathbf{\widetilde{Z}}^{-1}_{s-1} - \Atilde\comp{{2:\nparts,2:\nparts}} \right)^{-1} \end{bmatrix}.
\end{align*}
From here, the stability function becomes
\begin{equation*}
	R(\mathbf{z}) = 1 + \btilde\comp{1}* \, \mathbf{z} + \btilde\comp{2:\nparts}*
	\left( \mathbf{\widetilde{Z}}^{-1}_{s-1} - \Atilde\comp{2:\nparts,2:\nparts} \right)^{-1} \left( \Atilde\comp{2:\nparts,1} \, \mathbf{z} +\one{\nparts (s-1)} \right).
\end{equation*}
Stiff accuracy further simplifies this to
\begin{equation*}
	R(\mathbf{z}) = \unitvec{\nparts (s - 1)}^\tp \left( \eye{\nparts (s - 1)} - \Atilde\comp{2:\nparts,2:\nparts} \, \mathbf{\widetilde{Z}}_{s-1} \right)^{-1} \left( \Atilde\comp{2:\nparts,1} \, \mathbf{z} +\one{\nparts (s-1)} \right).
\end{equation*}
The behavior in the stiff limit is not as straightforward as in \cref{eqn:stiff-acc-lim} where we assumed invertibility of $\A$.  In general, the stability function of ESDIRK IMIM-GARK methods as $z\comp{\nparts} \to -\infty$ is a nonzero function of $z\comp{1}, \dots, z\comp{\nparts - 1}$.  We therefore consider the simpler limit when $z\comp{1} = \dots = z\comp{\nparts}$:
\begin{equation*}
	\lim_{\mathbf{z} \to -\infty} R(\mathbf{z}) = -\unitvec{\nparts (s - 1)}^\tp \left( \Atilde\comp{2:\nparts,2:\nparts} \right)^{-1} \Atilde\comp{2:\nparts,1} \, \one{s}.
\end{equation*}
The new methods documented in \cref{sec:new_methods}, for example, have $R(\mathbf{z} \to -\infty) = 1$. For completeness, we also provide stability plots in \cref{fig:stab-plots} for GARK-ADI methods in the case when $\nparts = 2$ and $z\comp{1} = z\comp{2}$.
\fi

\section{\added[id=2]{Classical LOD and ADI} methods in the GARK framework}
\label{sec:classical-adi-methods}

This section is devoted to showing that classical Locally One-Dimensional (LOD) and Alternating Direction Implicit (ADI) schemes appearing in the literature can be formulated within the IMIM-GARK framework. In particular, the consistency of such methods can be treated in a unified way within the GARK formalism. To this end we consider the non-autonomous ODE problem \cref{eqn:additive-ode}.  We shall deal with GARK methods of the form \cref{eqn:consistent-Butcher-tableau}. The consistency order of the following methods can be checked by using the order conditions indicated in \cref{app:order_conditions} at the end of the manuscript.

\subsection{LOD-Backward Euler method}
%
The LOD-Backward Euler method \cite[page 348]{Hundsdorfer_2003_book} reads
\begin{equation}
	\label{eqn:ADI-BE}
	\begin{split}
		v_0 &= y_n,\\
		v_{q} &= v_{q-1}+\Dt\, f\comp{q}(t_{n+1},v_{q}), \qquad q = 1,\ldots,\nparts,\\
		y_{n+1} &= v_\nparts.
	\end{split}
\end{equation}
We rewrite \cref{eqn:ADI-BE} in GARK notation with the stages $Y\comp{q} \coloneqq v_{q}$:
\begin{equation}
\label{eqn:ADI-BE-gark}
	\begin{split}
		Y\comp{q} &=y_n + \Dt \sum_{m=1}^{q} f\comp{m} \mleft( t_{n+1}, Y\comp{m} \mright),  \quad q = 1,\ldots,\nparts,\\
		y_{n+1}&=Y\comp{\nparts} = y_n + \Dt \sum_{m=1}^{\nparts} f\comp{m}\mleft(t_{n+1},Y\comp{m}\mright).
	\end{split}
\end{equation}
This method has order of consistency one, and it is stiffly accurate but is not internally consistent.  It has the form \cref{eqn:Butcher-tableau-structure-2} and further is an ADI-GARK method with the coefficients
\begin{alignat*}{2}
	\A\comp{q,m} &= \A\I = \begin{bmatrix}1\end{bmatrix}, \text{ for } m \leq q, & \qquad \A\comp{q,m} &= \A\E = \begin{bmatrix}0\end{bmatrix}, \text{ for } m > q, \\
	\b\comp{q} &= \b\I= \begin{bmatrix}1\end{bmatrix}, & \qquad \c\comp{q} &= \c\I = \begin{bmatrix}1\end{bmatrix},
\end{alignat*}
for $q,m = 1, \dots, \nparts$. 

\subsection{Yanenko's LOD-Crank-Nicolson method}
%
Yanenko's LOD-Crank-Nicolson method \cite[page 351]{Hundsdorfer_2003_book} reads
\begin{equation}
	\label{eqn:ADI-CN}
	\begin{split}
		v_0 &= y_n,\\
		v_{q}&= v_{q-1}+\frac{\Dt}{2} \left( f\comp{q}(t_{n}+c_{q-1}\,\Dt,v_{q-1})+ f\comp{q}(t_{n}+c_{q}\,\Dt,v_{q}) \right),\\
		&  \qquad q = 1,\ldots,\nparts,\\
		y_{n+1}&= v_\nparts,
	\end{split}
\end{equation}
with $c_0=0$, $c_\nparts=1$, and $c_{q}=1/2$ for $q = 1, \dots, \nparts - 1$. 
Iterating the stage values in \cref{eqn:ADI-CN} leads to the following formula:
\begin{equation*}
	\begin{split}
		v_q&= y_n+ \frac{\Dt}{2} \sum_{m=1}^q \left( f\comp{m}\mleft( t_{n}+c_{m-1}\,\Dt,v_{m-1} \mright)
		+ f\comp{m} \mleft( t_{n}+c_{m}\,\Dt,v_{m} \mright) \right).
	\end{split}
\end{equation*}
The method is cast in the GARK form \cref{eqn:consistent-Butcher-tableau} by defining the stages
\begin{equation*}
	Y\comp{1} \coloneqq \begin{bmatrix}v_0\\v_{1}\end{bmatrix}, \quad \ldots \quad
	Y\comp{q} \coloneqq \begin{bmatrix}v_{q-1}\\v_q\end{bmatrix}, \quad \ldots \quad 
	Y\comp{\nparts} \coloneqq \begin{bmatrix}v_{\nparts-1}\\v_{\nparts}\end{bmatrix}.
\end{equation*} 
Note that $Y\comp{q}_1=Y\comp{q-1}_2$ for $q \ge 2$. We rewrite \cref{eqn:ADI-CN} in GARK notation as follows:
\begin{equation}
	\label{eqn:ADI-CN-GARK}
	\begin{split}
		Y\comp{q}_1 &= y_n+ \frac{\Dt}{2} \sum_{m=1}^{q-1} \left( f\comp{m} \mleft( t_{n}+c_{m-1} \, \Dt,Y\comp{m}_1 \mright)
		+ f\comp{m} \mleft( t_{n}+c_{m}\,\Dt,Y\comp{m}_2 \mright) \right), \\
		Y\comp{q}_2 &= y_n+ \frac{\Dt}{2} \sum_{m=1}^q \left( f\comp{m} \mleft( t_{n}+c_{m-1}\,\Dt, Y\comp{m}_1 \mright)
		+ f\comp{m} \mleft(t_{n}+c_{m}\,\Dt,Y\comp{m}_2 \mright) \right), \\
		y_{n+1}&= Y\comp{\nparts}_2.
	\end{split}
\end{equation}
The GARK Butcher tableau \cref{eqn:Butcher-tableau-structure-2} is defined by the coefficient matrices:
\begin{alignat*}{2}
	\A\comp{q,m}&=\A\L=\begin{smatrix}\frac{1}{2} & \frac{1}{2} \\\frac{1}{2} & \frac{1}{2}\end{smatrix}, \text{ for } m < q,
	&\qquad
	\A\comp{q,q}&=\A\D=\begin{smatrix}0 & 0 \\\frac{1}{2} & \frac{1}{2}\end{smatrix}, \\
	\A\comp{q,m}&=\A\U=\begin{smatrix}0 & 0 \\0 & 0\end{smatrix}, \text{ for } m > q,
	& \qquad \b\comp{q}&=\b\D= \begin{smatrix} \frac{1}{2} \\ \frac{1}{2} \end{smatrix}, \\
	\c\comp{q}&=\c\D=\begin{smatrix} c_{q-1} \\ c_{q} \end{smatrix},
	& \qquad m, q &= 1, \dots, \nparts.
\end{alignat*}
We then immediately observe that \cref{eqn:ADI-CN-GARK} is only order one. Furthermore, it is stiffly accurate and not internally consistent. On the other hand, the matrix $\A$ is already in lower triangular form since the stages values $Y_i\comp{q}$ are displayed in the same way they are actually computed.

\begin{remark}
	Symmetric and parallel versions of the method \cref{eqn:ADI-CN} are known to provide respective second order methods at the expense of doubling the computational cost \cite[page 351-352]{Hundsdorfer_2003_book}. Both methods can also be expressed as GARK methods. First, the symmetric Yanenko's LOD-Crank-Nicolson method \cite[page 351]{Hundsdorfer_2003_book} is given by
	\begin{equation}
		\label{eqn:symmetric-ADI-CN}
		\begin{aligned}
			v_0&= y_n,\\
			v_{q}&= v_{q-1}+\frac{\Dt}{4}\left( f\comp{q} \mleft( t_{n}+ \frac{c_{q-1}}{2} \, \Dt,v_{q-1})+ f\comp{q}(t_{n}+ \frac{c_{q}}{2} \, \Dt,v_{q} \mright) \right),
			\\
			& \qquad q = 1, \dots, \nparts,
			\\
			w_0 &= v_\nparts,
			\\
			w_{q}&= w_{q-1}+\frac{\Dt}{4} \left( f\comp{\nparts+1-q} \mleft( t_{n}+\frac{1 + c_{q-1}}{2} \, \Dt,w_{q-1} \mright) \right.
			\\
			& \quad \left. + f\comp{\nparts+1-q} \mleft(t_{n}+\frac{1 + c_{q}}{2} \, \Dt,w_{q} \mright) \right), \qquad q = 1, \dots, \nparts,
			\\
			y_{n+1}&= w_\nparts.
		\end{aligned}
	\end{equation}
	\added[id=2]{This computational process is mapped to GARK stages by}
	\begin{equation} \label{eqn:symmetric-ADI-CN_stages}
    	Y\comp{1} \coloneqq \begin{bmatrix}v_0\\v_{1}\\w_{\nparts-1}\\w_{\nparts}\end{bmatrix}, \quad \ldots \quad
    	Y\comp{q} \coloneqq \begin{bmatrix}v_{q-1}\\v_q\\w_{\nparts-q}\\w_{\nparts+1-q}\end{bmatrix}, \quad \ldots \quad 
    	Y\comp{\nparts} \coloneqq \begin{bmatrix}v_{\nparts-1}\\v_{\nparts}\\w_{0}\\w_{1}\end{bmatrix}.
    \end{equation} 
	Now, \cref{eqn:symmetric-ADI-CN} can be formulated in the GARK framework as follows:
	\comment[id=2]{Fixed the indexing of stages 3 and 4}
	\begin{equation}
		\label{eqn:symmetric-ADI-CN-GARK}
		\begin{aligned}
			Y\comp{q}_1 &=  y_n+ \frac{\Dt}{4} \sum_{m=1}^{q-1} \left( f\comp{m} \mleft( t_{n}+\frac{c_{m-1}}{2} \, \Dt,Y\comp{m}_1 \mright) + f\comp{m} \mleft(t_{n}+\frac{c_{m}}{2} \, \Dt,Y\comp{m}_2 \mright) \right), 
			\\
			Y\comp{q}_2 &=  Y\comp{q+1}_1, \qquad q = 1, \dots, \nparts,
			\\
			Y\comp{q}_3 &=  y_n + \frac{\Dt}{4} \sum_{m=1}^{\nparts} \left( f\comp{m} \mleft( t_{n} + \frac{c_{m-1}}{2} \, \Dt, Y\comp{m}_1 \mright) + f\comp{m} \mleft( t_{n} + \frac{c_{m}}{2} \, \Dt, Y\comp{m}_2 \mright) \right) \\
			& \quad + \frac{\Dt}{4} \sum_{m=q+1}^{\nparts} \left( f\comp{m} \mleft( t_{n}+\frac{1+c_{\nparts-m}}{2} \, \Dt, Y\comp{m}_3 \mright) \right. \\
			& \quad \left. + f\comp{m} \mleft( t_{n}+\frac{1+c_{\nparts+1-m}}{2} \, \Dt,Y\comp{m}_4 \mright) \right), \\
			Y\comp{q}_4 &=  Y\comp{q-1}_3, \qquad q = \nparts, \dots, 1,
			\\
			y_{n+1}&= Y\comp{1}_4.
		\end{aligned}
	\end{equation}
	The corresponding coefficient matrices are then defined by
	\begin{alignat*}{2}
		\A\comp{q,m}&= \A\L&=&\begin{smatrix} \frac{1}{4} & \frac{1}{4} & 0 & 0 \\\frac{1}{4} & \frac{1}{4} & 0 & 0 \\
		\frac{1}{4} & \frac{1}{4} & 0 & 0 \\
		\frac{1}{4} & \frac{1}{4} & 0 & 0 
		\end{smatrix}, \text{ for } m < q, \\
		\A\comp{q,q}&= \A\D&=&\begin{smatrix} 0 & 0 & 0 & 0 \\\frac{1}{4} & \frac{1}{4} & 0 & 0 \\
		\frac{1}{4} & \frac{1}{4} & 0 & 0 \\
		\frac{1}{4} & \frac{1}{4} & \frac{1}{4} & \frac{1}{4}
		\end{smatrix}, \\
		\A\comp{q,m}&= \A\U&=&\begin{smatrix}
		0 & 0 & 0 & 0 \\0 & 0 & 0 & 0 \\
		\frac{1}{4} & \frac{1}{4} & \frac{1}{4} & \frac{1}{4} \\
		\frac{1}{4} & \frac{1}{4} & \frac{1}{4} & \frac{1}{4}
		\end{smatrix}, \text{ for } m > q, \\
		\b\comp{q}* &= \b\D* &=& \begin{bmatrix} \frac{1}{4} & \frac{1}{4} & \frac{1}{4} & \frac{1}{4} \end{bmatrix}, \\
		\c\comp{q}* &= \c\D* &=& \begin{bmatrix} \frac{c_{q-1}}{2} & \frac{c_{q}}{2} & \frac{1+c_{\nparts-q}}{2} & \frac{1+c_{\nparts+1-q}}{2} \end{bmatrix},
	\end{alignat*}
	for $q,m = 1, \dots, \nparts$.  Now it is readily checked that \cref{eqn:symmetric-ADI-CN-GARK} reaches order two. Furthermore, observe that the method expressed this way does not satisfy the stiff accuracy condition \cref{eqn:gark-stiff-accuracy}. However, the stages can be reordered into a lower triangular form that is stiffly accurate.
	\added[id=2]{The permuted tableau \cref{eqn:permuted-Butcher-tableau} is given by}
	\begin{equation*}
		\begin{butchertableau}{c|ccccc:ccccc}
			& f\comp{1}_1 & f\comp{1}_2 & \ldots & f\comp{\nparts}_1 & f\comp{\nparts}_2  & f\comp{\nparts}_3 & f\comp{\nparts}_4  & \ldots & f\comp{1}_3 & f\comp{1}_4 \\ \hline
			
			Y\comp{1}_1 & 0 & 0 &  \ldots & 0 & 0 & 0 & 0 &  \ldots & 0 & 0
			\\ 
			
			Y\comp{1}_2 &\frac{1}{4} & \frac{1}{4} &  \ldots & 0 & 0 & 0 & 0 &  \ldots & 0 & 0
			\\
			
			\vdots &\vdots& \vdots &  \ddots &  \vdots & \vdots & \vdots & \vdots &  \ddots &  \vdots  & \vdots 
			\\
			
			Y\comp{\nparts}_1 &\frac{1}{4} & \frac{1}{4} &  \ldots & 0 & 0 & 0 & 0 &  \ldots & 0 & 0
			\\ 
			Y\comp{\nparts}_2 & \frac{1}{4} & \frac{1}{4} &  \ldots & \frac{1}{4} & \frac{1}{4} & 0 & 0 &  \ldots & 0 & 0
			\\\hdashline 
			Y\comp{\nparts}_3 & \frac{1}{4} & \frac{1}{4} &  \ldots &  \frac{1}{4} & \frac{1}{4} & 0 & 0 &  \ldots & 0 & 0
			\\ 
			
			Y\comp{\nparts}_4 &\frac{1}{4} & \frac{1}{4} &  \ldots &  \frac{1}{4} & \frac{1}{4} & \frac{1}{4} & \frac{1}{4} &  \ldots & 0 & 0
			\\ 
			\vdots &\vdots& \vdots &  \ddots & \vdots & \vdots & \vdots & \vdots &  \ddots &  \vdots & \vdots
			\\ 
			Y\comp{1}_3 & \frac{1}{4} & \frac{1}{4} &  \ldots &  \frac{1}{4} & \frac{1}{4} & \frac{1}{4} & \frac{1}{4} &  \ldots &  0 & 0
			
			\\ 
			Y\comp{1}_4 & \frac{1}{4} & \frac{1}{4} &  \ldots &  \frac{1}{4} & \frac{1}{4} & \frac{1}{4} & \frac{1}{4} &  \ldots &  \frac{1}{4} & \frac{1}{4}
			\\\hline 
			y_{n+1} &\frac{1}{4} & \frac{1}{4} &  \ldots &  \frac{1}{4} & \frac{1}{4}& \frac{1}{4} & \frac{1}{4} &  \ldots &  \frac{1}{4} & \frac{1}{4}
		\end{butchertableau},
	\end{equation*}
	\added[id=2]{and in this form, the stiff accuracy is evident.}
	
	Secondly, for the parallel Yanenko's LOD-Crank-Nicolson method \cite[page 352]{Hundsdorfer_2003_book}
	\begin{equation}
		\label{eqn:parallel-ADI-CN}
		\begin{split}
			v_0 &= y_n,\\
			v_{q} &= v_{q-1}+\frac{\Dt}{2}\left( f\comp{q} \mleft( t_{n}+ c_{q-1} \, \Dt,v_{q-1} \mright)+ f\comp{q} \mleft( t_{n}+ c_{q} \, \Dt,v_{q} \mright) \right), \\
			& \qquad q = 1,\ldots,\nparts,\\
			w_0 &= y_n,\\
			w_{q}&= w_{q-1}+\frac{\Dt}{2} \left(f\comp{\nparts+1-q} \mleft( t_{n}+ c_{q-1} \, \Dt,w_{q-1} \mright) + f\comp{\nparts+1-q} \mleft( t_{n}+ c_{q} \, \Dt,w_{q} \mright) \right), \\
			& \qquad q = 1,\ldots,\nparts,\\
			y_{n+1}&= \frac{1}{2}(v_\nparts+w_\nparts),
		\end{split}
	\end{equation}
	we similarly consider the stages \cref{eqn:symmetric-ADI-CN_stages} and rewrite \cref{eqn:parallel-ADI-CN} in GARK notation as
	\comment[id=2]{Fixed the indexing of stages 3 and 4}
	\begin{equation}
		\label{eqn:parallel-ADI-CN-GARK}
		\begin{split}
			Y\comp{q}_1 &=  y_n+ \frac{\Dt}{2} \sum_{m=1}^{q-1} \left( f\comp{m} \mleft( t_{n}+c_{m-1} \, \Dt,Y\comp{m}_1 \mright) + f\comp{m} \mleft(t_{n}+c_{m} \, \Dt,Y\comp{m}_2 \mright) \right), 
			\\
			Y\comp{q}_2 &=  Y\comp{q+1}_1, \qquad q = 1, \dots, \nparts,
			\\
			Y\comp{q}_3 &=  y_n + \frac{\Dt}{4} \sum_{m=q+1}^{\nparts} \left( f\comp{m} \mleft( t_{n}+c_{\nparts-m} \, \Dt, Y\comp{m}_3 \mright) \right. \\
			& \quad \left. + f\comp{m} \mleft( t_{n}+ c_{\nparts+1-m} \, \Dt,Y\comp{m}_4 \mright) \right), \\
			Y\comp{q}_4 &=  Y\comp{q-1}_3, \qquad q = \nparts, \dots, 1,
			\\
			y_{n+1}&= \frac{1}{2} \left(Y\comp{\nparts}_2 + Y\comp{1}_4 \right).
		\end{split}
	\end{equation}
	Its GARK Butcher tableau \cref{eqn:Butcher-tableau-structure-2} is then defined by the following coefficient matrices:
	\begin{alignat*}{2}
		\A\comp{q,m}&=\A\L&=&\begin{smatrix}
		\frac{1}{2} & \frac{1}{2} & 0 & 0 \\
		\frac{1}{2} & \frac{1}{2} & 0 & 0 \\
		0 & 0 & 0 & 0 \\
		0 & 0 & 0 & 0 
		\end{smatrix}, \text{ for } m < q, \\
		\A\comp{q,q}&=\A\D&=&\begin{smatrix}
		0 & 0 & 0 & 0 \\
		\frac{1}{2} & \frac{1}{2} & 0 & 0 \\
		0 & 0 & 0 & 0 \\
		0 & 0 & \frac{1}{2} & \frac{1}{2}
		\end{smatrix}, \\
		\A\comp{q,m}&=\A\U&=&\begin{smatrix}
		0 & 0 & 0 & 0 \\
		0 & 0 & 0 & 0 \\
		0 & 0 & \frac{1}{2} & \frac{1}{2} \\
		0 & 0 & \frac{1}{2} & \frac{1}{2}
		\end{smatrix}, \text{ for } m > q, \\
		\b\comp{q}* &= \b\D &=& \begin{bmatrix} \frac{1}{4} & \frac{1}{4} & \frac{1}{4} & \frac{1}{4} \end{bmatrix}, \\
		\c\comp{q}* &= \c\D* &=& \begin{bmatrix} c_{q-1} & c_{q} & c_{\nparts-q} & c_{\nparts+1-q} \end{bmatrix}.
	\end{alignat*}
	From here, it is readily checked that \cref{eqn:parallel-ADI-CN-GARK} reaches order two. 

	\ifreport
	Furthermore, observe that the method can be expressed in lower triangular form with stage values ordered as they are actually computed, i.e., 
	$f\comp{1}_1,f\comp{1}_2,\ldots,f\comp{\nparts}_1, f\comp{\nparts}_2$, in parallel to $f\comp{\nparts}_3, f\comp{\nparts}_4,\ldots, f\comp{1}_3, f\comp{1}_4$:
	
	\begin{equation*}
		\begin{butchertableau}{c|ccccc:ccccc}
			& f\comp{1}_1 & f\comp{1}_2 & \ldots & f\comp{\nparts}_1 & f\comp{\nparts}_2  & f\comp{\nparts}_3 & f\comp{\nparts}_4  & \ldots & f\comp{1}_3 & f\comp{1}_4 \\ \hline
			
			Y\comp{1}_1 & 0 & 0 &  \ldots & 0 & 0 & 0 & 0 &  \ldots & 0 & 0
			\\ 
			
			Y\comp{1}_2 &\frac{1}{2} & \frac{1}{2} &  \ldots & 0 & 0 & 0 & 0 &  \ldots & 0 & 0
			\\
			
			\vdots &\vdots& \vdots &  \ddots &  \vdots & \vdots & \vdots & \vdots &  \ddots &  \vdots  & \vdots 
			\\
			
			Y\comp{\nparts}_1 &\frac{1}{2} & \frac{1}{2} &  \ldots & 0 & 0 & 0 & 0 &  \ldots & 0 & 0
			\\ 
			Y\comp{\nparts}_2 & \frac{1}{2} & \frac{1}{2} &  \ldots & \frac{1}{2} & \frac{1}{2} & 0 & 0 &  \ldots & 0 & 0
			\\\hdashline 
			Y\comp{\nparts}_3 & 0 & 0 &  \ldots & 0 & 0 & 0 & 0 &  \ldots & 0 & 0
			\\ 
			
			Y\comp{\nparts}_4 & 0 & 0 &  \ldots &  0 & 0 & \frac{1}{2} & \frac{1}{2} &  \ldots & 0 & 0
			\\ 
			\vdots &\vdots& \vdots &  \ddots & \vdots & \vdots & \vdots & \vdots &  \ddots &  \vdots & \vdots
			\\ 
			Y\comp{1}_3 & 0 & 0 &  \ldots & 0 & 0 & \frac{1}{2} & \frac{1}{2} &  \ldots & 0 & 0
			
			\\ 
			Y\comp{1}_4 & 0 & 0 &  \ldots & 0 & 0 & \frac{1}{2} & \frac{1}{2} &  \ldots & \frac{1}{2} & \frac{1}{2}
			\\\hline 
			y_{n+1} &\frac{1}{4} & \frac{1}{4} &  \ldots &  \frac{1}{4} & \frac{1}{4}& \frac{1}{4} & \frac{1}{4} &  \ldots &  \frac{1}{4} & \frac{1}{4}
		\end{butchertableau}
	\end{equation*}
	\fi
	
\end{remark}

\subsection{Trapezoidal Splitting}
%
The Trapezoidal Splitting scheme \cite[page 359]{Hundsdorfer_2003_book} reads
\begin{equation}
	\label{eqn:splitting-Trapezoidal}
	\begin{split}
		v_0&= y_n,\\
		v_{q}&= v_{q-1}+\frac{\Dt}{2} \, f\comp{q} \mleft( t_{n},v_{q-1} \mright),\qquad q = 1,\ldots,\nparts,\\
		v_{\nparts+q}&= v_{\nparts+q-1}+\frac{\Dt}{2} \, f\comp{\nparts+1-q}(t_{n+1},v_{\nparts+q}),\qquad q = 1,\ldots,\nparts,\\
		y_{n+1}&= v_{2\nparts}.
	\end{split}
\end{equation}
The method is cast in the GARK form \cref{eqn:consistent-Butcher-tableau} by defining the stages
\comment[id=2]{Fix index for $Y\comp{\nparts}$}
\begin{equation*}
	Y\comp{1} \coloneqq \begin{bmatrix}v_0\\v_{2\nparts}\end{bmatrix}, \quad \ldots \quad
	Y\comp{q} \coloneqq \begin{bmatrix}v_{q-1} \\ v_{2\nparts-q+1}\end{bmatrix}, \quad \ldots \quad
	Y\comp{\nparts} \coloneqq \begin{bmatrix}v_{\nparts-1}\\v_{\nparts+1}\end{bmatrix}.
\end{equation*}
Iterating the stages \cref{eqn:splitting-Trapezoidal} leads to
\begin{equation}
	\label{eqn:splitting-Trapezoidal-GARK}
	\begin{split}
		Y\comp{q}_1 &= y_n+ \frac{\Dt}{2} \sum_{m=1}^{q-1} f\comp{m} \mleft( t_{n},Y\comp{m}_1 \mright) ,\\
		Y\comp{q}_2&= y_n + \frac{\Dt}{2} \sum_{m=1}^{\nparts} f\comp{m} \mleft( t_{n}, Y\comp{m}_1 \mright)+ \frac{\Dt}{2} 
		\sum_{m=q}^{\nparts} f\comp{m} \mleft( t_{n+1},Y\comp{m}_2 \mright), \\
		& \qquad q = \nparts, \ldots, 1\\
		y_{n+1}&= Y\comp{1}_2,
\end{split}
\end{equation}
and the coefficient matrices
\begin{alignat*}{2}
	\A\comp{q,m}&=\A\L=\begin{smatrix}\frac{1}{2} & 0 \\\frac{1}{2} & 0\end{smatrix}, \text{ for } m < q,
	&\qquad
	\b\comp{q}&= \b\D = \begin{smatrix} \frac{1}{2} \\ \frac{1}{2} \end{smatrix}, \\
	\A\comp{q,m}&=\A\D=\A\U=\begin{smatrix}0 & 0 \\\frac{1}{2} & \frac{1}{2}\end{smatrix}, \text{ for } m \geq q,
	& \qquad \c\comp{q} &= \c\D = \begin{smatrix} 0 \\ 1 \end{smatrix}.
\end{alignat*}
From \cref{eqn:GARK-order-conditions}, it is observed that \cref{eqn:splitting-Trapezoidal-GARK} is a second order method and is not internally consistent.
\added[id=2]{The method \cref{eqn:splitting-Trapezoidal-GARK} also admits the following lower triangular formulation where the stage values are presented in the way they are  solved:}
\begin{equation*}
	\begin{butchertableau}{c|cccc:cccc}
		& f\comp{1}_1 & f\comp{2}_1 & \ldots & f\comp{\nparts}_1  & f\comp{\nparts}_2 & f\comp{\nparts-1}_2  & \ldots & f\comp{1}_2\\ \hline
		Y\comp{1}_1 & 0 & 0 &  \ldots & 0 & 0 & 0 &  \ldots & 0 \\
		Y\comp{2}_1 &\frac{1}{2} & 0 &  \ldots & 0 & 0 & 0 &  \ldots & 0\\
		\vdots &\vdots& \vdots &  \ldots &  \vdots & \vdots & \vdots &  \ldots &  \vdots
		\\ Y\comp{\nparts}_1 &\frac{1}{2} & \frac{1}{2} &  \ldots & 0 & 0 & 0 &  \ldots & 0 \\\hdashline
		Y\comp{\nparts}_2 &\frac{1}{2} & \frac{1}{2} &  \ldots &  \frac{1}{2} & \frac{1}{2} & 0 &  \ldots & 0
		\\ Y\comp{\nparts-1}_2 &\frac{1}{2} & \frac{1}{2} &  \ldots &  \frac{1}{2} & \frac{1}{2} & \frac{1}{2} &  \ldots & 0
		\\ \vdots &\vdots& \vdots &  \ldots &  \vdots & \vdots & \vdots &  \ldots &  \vdots
		\\ Y\comp{1}_2 &\frac{1}{2} & \frac{1}{2} &  \ldots &  \frac{1}{2} & \frac{1}{2} & \frac{1}{2} &  \ldots &  \frac{1}{2}\\\hline
		y_{n+1} & \frac{1}{2} & \frac{1}{2} &  \ldots &  \frac{1}{2} & \frac{1}{2} & \frac{1}{2} &  \ldots &  \frac{1}{2}
	\end{butchertableau}
\end{equation*}
\added[id=2]{In this permuted form, the stiff accuracy condition holds.}

\subsection{Douglas method}
%
We now consider the partitioned system
\begin{equation}
	\label{eqn:split-with-nonstiff}
	y'=f(t,y)=\sum_{m=0}^\nparts f\comp{m}(t,y).
\end{equation}
Compared to \cref{eqn:additive-ode}, there is the additional partition $f\comp{0}$, which is nonstiff and can be treated explicitly.  The Douglas splitting scheme \cite[page 373]{Hundsdorfer_2003_book} applied to \cref{eqn:split-with-nonstiff} reads
\begin{equation}
	\label{eqn:splitting-Douglas}
	\begin{split}
		v_0&= y_n+\Dt\, f(t_n,y_n),\\
		v_{q}&= v_{q-1}+\theta\,\Dt\left( f\comp{q}(t_{n+1},v_{q})- f\comp{q}(t_{n},y_n)\right),\qquad q = 1,\ldots,\nparts,\\
		y_{n+1}&= v_{\nparts}.
	\end{split}
\end{equation}
Note that an explicit Euler step for the entire system is followed by stabilizing corrections, which are implicit stages in only one component at a time. The method \cref{eqn:splitting-Douglas} is cast in the GARK formulation by defining the stages
\comment[id=2]{Fixed the summation indexing}
\begin{align*}
	Y\comp{q}_1 &= y_n, \qquad q = 0, \dots, \nparts, \\
	Y\comp{q}_2 &= y_n + \Dt \, f\comp{0} \mleft(t_{n}, Y\comp{0}_1 \mright) \\
	&\quad + (1 - \theta) \, \Dt \sum_{m=1}^q f\comp{m} \mleft(t_{n}, Y\comp{m}_1 \mright) + \Dt \sum_{m=q+1}^{\nparts} f\comp{m} \mleft(t_{n}, Y\comp{m}_1 \mright) \\
	& \quad + \theta \, \Dt \sum_{m=1}^q f\comp{m} \mleft(t_{n+1}, Y\comp{m}_2 \mright),\qquad q = 1, \dots, \nparts, \\
	y_{n+1} &= Y\comp{\nparts}_2.
\end{align*}
From here, we can see this corresponds to an ADI-GARK scheme with coefficient matrices
\begin{subequations} \label{eqn:douglas_coeffs}
	\begin{equation} \label{eqn:douglas_coeffs:main}
		\begin{alignedat}{2}
			\A\comp{q,m}&=\A\I=\begin{bmatrix}0 & 0 \\ 1-\theta & \theta\end{bmatrix}, \text{ for } m \leq q,
			&\qquad
			\b\comp{q}&= \b\I = \begin{bmatrix} 1-\theta \\ \theta \end{bmatrix}, \\
			\A\comp{q,m}&=\A\E=\begin{bmatrix}0 & 0 \\ 1 & 0\end{bmatrix}, \text{ for } m > q,
			& \qquad \c\comp{q} &= \c\I = \begin{bmatrix} 0 \\ 1 \end{bmatrix},
		\end{alignedat}
	\end{equation}
	for $q,m = 1, \dots, \nparts$.  The nonstiff partition $f\comp{0}$ prepends the coefficients
	\begin{equation} \label{eqn:douglas_coeffs:0}
		\begin{split}
			\A\comp{0,0} = \c\comp{0} = \begin{bmatrix}
				0
			\end{bmatrix},
			\quad
			\A\comp{q,0} = \begin{bmatrix}
				0 \\ 1
			\end{bmatrix},
			\quad
			\A\comp{0,q} = \begin{bmatrix}
				0 & 0
			\end{bmatrix},
			\quad
			\b\comp{0} = \begin{bmatrix}
				1
			\end{bmatrix},
		\end{split}
	\end{equation}
\end{subequations}
to the tableau, with $q = 1, \dots, \nparts$.  We observe that the method is internally consistent and stiffly accurate.  Considering \cref{eqn:GARK-order-conditions}, we immediately check that \cref{eqn:douglas_coeffs} is a first order method for arbitrary $f\comp{0}$ and all $\theta\in\R$. In fact, the method reaches order two iff $f\comp{0}\equiv 0$ and $\theta=\frac{1}{2}$.  

\ifreport
Furthermore, the Douglas method \cref{eqn:douglas_coeffs} admits the following natural lower triangular formulation
\begin{equation*}
	\begin{butchertableau}{c|c:c:c}
		& f\comp{0}_1 & f\comp{1:\nparts}_1 & f\comp{1:\nparts}_2
		\\ \hline
		Y\comp{0}_1 & 0 & \zero{\nparts}^\tp & \zero{\nparts}^\tp \\\hdashline
		Y\comp{1:\nparts}_1 & \zero{\nparts} & \zero{\nparts}{\nparts} & \zero{\nparts}{\nparts}
		\\\hdashline 
		Y\comp{1:\nparts}_2 & \one{\nparts} & \one{\nparts}{\nparts} -\theta \, \tril{\nparts} & \theta \, \tril{\nparts}
		\\\hline
		y_{n+1} & 1 & (1-\theta) \, \one{\nparts}^\tp & \theta \, \one{\nparts}^\tp
	\end{butchertableau},
\end{equation*}
where
\begin{equation}\label{eqn:coefmatrices-triangformN}
	\tril{\nparts}=\begin{bmatrix}
		1 & \ldots & 0\\
		\vdots  & \ddots & \vdots\\
		1 &  \ldots & 1
	\end{bmatrix} \in \R{\nparts}{\nparts}.
\end{equation}
\fi

\begin{remark}
	Modified versions of the Douglas scheme \cref{eqn:splitting-Douglas} have been considered in \cite{Arraras2017,Hundsdorfer_2002_splitting} in such a way that these modifications provide second order approximation when $\theta=\frac{1}{2}$ regardless of the splitting term $f\comp{0}$. A first modified Douglas method is obtained by introducing an initial stabilizing correction for $f\comp{0}$ performed as
	\begin{equation}
		\label{eqn:splitting-modif-Douglas-f0first}
		\begin{split}
			\widehat{v}_0 &= y_n+\Dt \, f(t_n,y_n)
			\\
			v_0 &= \widehat{v}_0+\theta \, \Dt \left( f\comp{0}(t_{n+1},\widehat{v}_0)- f\comp{0}(t_{n},y_n) \right),
			\\
			v_{q} &= v_{q-1}+\theta\,\Dt \left( f\comp{q}(t_{n+1},v_{q})- f\comp{q}(t_{n},y_n) \right),\qquad q = 1,\ldots,\nparts,
			\\
			y_{n+1} &= v_{\nparts}.
		\end{split}
	\end{equation}
	The method \cref{eqn:splitting-modif-Douglas-f0first} has GARK stages
	\comment[id=2]{Fixed the summation indexing}
	\begin{align*}
		Y\comp{q}_1 &= y_n, \qquad q = 0, \dots, \nparts, \\
		Y\comp{0}_2 &= y_n + \Dt \sum_{m=0}^{\nparts} f\comp{m} \mleft(t_{n}, Y\comp{m}_1 \mright), \\
		Y\comp{q}_2 &= y_n + (1 - \theta) \, \Dt \sum_{m=0}^q f\comp{m} \mleft(t_{n}, Y\comp{m}_1 \mright) + \Dt \sum_{m=q+1}^{\nparts} f\comp{m} \mleft(t_{n}, Y\comp{m}_1 \mright) \\
		& \quad + \theta \, \Dt \sum_{m=0}^q f\comp{m} \mleft(t_{n+1}, Y\comp{m}_2 \mright), \qquad q = 1, \dots, \nparts, \\
		y_{n+1} &= Y\comp{\nparts}_2.
	\end{align*}
	The coefficient matrices for partitions $1$ to $\nparts$ are identical to the original Douglas splitting coefficients \cref{eqn:douglas_coeffs:main}.  For partition $f\comp{0}$, however, we replace \cref{eqn:douglas_coeffs:0} with
	\begin{alignat*}{2}
		\A\comp{0,0} &= \A\comp{0,q} = \begin{bmatrix}
			0 & 0 \\ 1 & 0
		\end{bmatrix}, & \qquad
		\A\comp{q,0} &= \begin{bmatrix}
			0 & 0 \\ 1-\theta & \theta
		\end{bmatrix}, \\
		\b\comp{0}* &= \begin{bmatrix}
			1-\theta & \theta
		\end{bmatrix}, & \qquad
		\c\comp{0}* &= \begin{bmatrix}
			0 & 1
		\end{bmatrix},
	\end{alignat*}
	for $q = 1, \dots, \nparts$.  From here, \cref{eqn:splitting-modif-Douglas-f0first} is a first order method for arbitrary $f\comp{0}$ and all $\theta\in\R$. Moreover, the method reaches order two iff $\theta=\frac{1}{2}$, regardless of $f\comp{0}$. 
	\ifreport
	A lower triangular formulation of the modified Douglas method \cref{eqn:splitting-modif-Douglas-f0first} is given by
	\begin{equation*}
		\begin{butchertableau}{c|c:c:c:c}
			& f_1\comp{0} & f\comp{1:\nparts}_1 & f_2\comp{0} & f\comp{1:\nparts}_2
			\\ \hline
			Y_1\comp{0} & 0 & \zero{\nparts}^\tp & 0 & \zero{\nparts}^\tp\\\hdashline
			Y\comp{1:\nparts}_1 & \zero{\nparts} & \zero{\nparts}{\nparts} & \zero{\nparts} & \zero{\nparts}{\nparts}
			\\\hdashline 
			Y_2\comp{0} & 1 & \one{\nparts}^\tp &  0 &  \zero{\nparts}^\tp\\\hdashline
			Y\comp{1:\nparts}_2 & (1-\theta) \, \one{\nparts} & \one{\nparts}{\nparts} -\theta \, \tril{\nparts} & \theta \, \one{\nparts} & \theta \, \tril{\nparts}
			\\\hline
			y_{n+1} & 1-\theta & (1-\theta) \, \one{\nparts}^\tp & \theta & \theta \, \one{\nparts}^\tp
		\end{butchertableau}.
	\end{equation*}
	\fi
	
	On the other hand, a second modification of the  Douglas method \cref{eqn:splitting-Douglas} is obtained by introducing a stabilizing correction for $f\comp{0}$ performed at the end of a step:
	\begin{equation}
		\label{eqn:splitting-modif-Douglas-f0last}
		\begin{aligned}
			v_0&= y_n+\Dt \, f(t_n,y_n)
			\\
			v_{q}&= v_{q-1}+\theta\,\Dt \left( f\comp{q}(t_{n+1},v_{q})- f\comp{q}(t_{n},y_n) \right), \qquad q = 1,\ldots,\nparts,
			\\
			y_{n+1}&= v_{\nparts}+\theta \, \Dt \left( f\comp{0}(t_{n+1},v_{\nparts})- f\comp{0}(t_{n},y_n) \right).
		\end{aligned}
	\end{equation}
	The corresponding stages are
	\comment[id=2]{Fixed the summation indexing}
	\begin{align*}
		Y\comp{q}_1 &= y_n, \qquad q = 0, \dots, \nparts, \\
		Y\comp{q}_2 &= y_n + \Dt \,f\comp{0} \mleft(t_{n}, Y\comp{0}_1 \mright) \\
		&\quad + (1 - \theta) \, \Dt \sum_{m=1}^q f\comp{m} \mleft( t_{n},Y\comp{m}_1 \mright) + \Dt \sum_{m=q+1}^{\nparts} f\comp{m} \mleft(t_{n}, Y\comp{m}_1 \mright) \\
		& \quad + \theta \, \Dt \sum_{m=1}^q f\comp{m} \mleft(t_{n+1},Y\comp{m}_2 \mright),\qquad q = 1, \dots, \nparts, \\
		Y\comp{0}_2 &= y_n + \Dt \, f\comp{0} \mleft( t_{n},Y\comp{0}_1 \mright) \\
		&\quad + \Dt \sum_{m=1}^{\nparts} \left( (1 - \theta) \, f\comp{m} \mleft(t_{n}, Y\comp{m}_1 \mright) + \theta \, f\comp{m} \mleft(t_{n+1}, Y\comp{m}_2 \mright) \right), \\
		y_{n+1} &= y_n + \Dt \sum_{m=0}^{\nparts} \left( (1 - \theta) \, f\comp{m} \mleft(t_{n}, Y\comp{m}_1 \mright) + \theta \, f\comp{m} \mleft(t_{n+1}, Y\comp{m}_2 \mright) \right).
	\end{align*}
	Once again, the coefficient matrices for partitions $1$ to $\nparts$ coincide with \cref{eqn:douglas_coeffs:main}, and partition $f\comp{0}$ has the coefficients
	\begin{alignat*}{2}
		\A\comp{0,0} &= \A\comp{q,0} = \begin{bmatrix}
			0 & 0 \\ 1  & 0
		\end{bmatrix}, 
		& \qquad
		\A\comp{0,q} &= \begin{bmatrix}
			0 & 0 \\ 1 - \theta & \theta
		\end{bmatrix}, \\
		\b\comp{0}* &= \begin{bmatrix}
			1-\theta & \theta
		\end{bmatrix},
		& \qquad
		\c\comp{0}* &= \begin{bmatrix}
			0 & 1
		\end{bmatrix},
	\end{alignat*}
for $q = 1, \dots, \nparts$.  We observe that in this case the method \cref{eqn:splitting-modif-Douglas-f0last} is not stiffly accurate. Anyway, it is a first order method for arbitrary $f\comp{0}$ and all $\theta\in\R$, and it reaches order two iff $\theta=\frac{1}{2}$, regardless of $f\comp{0}$. 
\ifreport
It also admits a natural lower triangular formulation
	\begin{equation*}
		\begin{butchertableau}{c|c:c:c:c}
			& f\comp{0}_1 & f\comp{1:\nparts}_1 & f\comp{1:\nparts}_2 & f\comp{0}_2
			\\ \hline
			Y\comp{0}_1 & 0 & \zero{\nparts}^\tp  & \zero{\nparts}^\tp & 0
			\\\hdashline
			
			Y\comp{1:\nparts}_1 & \zero{\nparts} & \zero{\nparts}{\nparts} & \zero{\nparts}{\nparts} & \zero{\nparts}
			\\\hdashline 
			
			Y\comp{1:\nparts}_2 & \one{\nparts} & \one{\nparts}{\nparts} -\theta \, \tril{\nparts}  & \theta \, \tril{\nparts} & \zero{\nparts}
			\\\hdashline
			
			Y\comp{0}_2 & 1 & (1-\theta) \, \one{\nparts}^\tp  &  \theta \, \one{\nparts}^\tp &  0
			\\\hline
			y_{n+1} & 1-\theta & (1-\theta) \, \one{\nparts}^\tp  &  \theta \, \one{\nparts}^\tp &  \theta
		\end{butchertableau}
	\end{equation*}
	with $\tril{\nparts}$ as in \cref{eqn:coefmatrices-triangformN}.
	\fi
\end{remark}

\subsection{Modified Craig-Sneyd scheme}
%

We consider the split system \cref{eqn:split-with-nonstiff} with one non-stiff component $f\comp{0}$. The second order Modified Craig-Sneyd scheme \cite{HouWel2009} reads:
\begin{equation}
	\label{eqn:splitting-Modified-Craig-Sneyd}
	\begin{split}
		v_0 &= y_n+\Dt\, f(t_n,y_n),\\
		v_{q}&= v_{q-1}+\theta \, \Dt \left( f\comp{q}(t_{n+1},v_{q})- f\comp{q}(t_{n},y_n)\right),\qquad q = 1,\ldots,\nparts,\\
		\widehat{v}_0 &= v_0 + \sigma \, \Dt \left( f\comp{0} (t_{n+1}, v_{\nparts}) - f\comp{0} (t_{n}, y_{n}) \right), \\
		w_0&= \widehat{v}_{0}+ \mu \, \Dt \left( f(t_{n+1},v_\nparts)-f(t_n,y_n) \right),\\
		w_{q} &= w_{q-1}+\theta\,\Dt \left( f\comp{q}(t_{n+1},w_{q})- f\comp{q}(t_{n},y_n) \right),\qquad q = 1,\ldots,\nparts,\\
		y_{n+1}&= w_{\nparts}.
	\end{split}
\end{equation}
\Cref{eqn:splitting-Modified-Craig-Sneyd} is order two iff $\sigma = \theta$ and $\mu = \frac{1}{2} - \theta$.  In the special case $\mu = 0$, we recover the so called Craig-Sneyd scheme \cite{Craig_1988_ADI}.

The method is cast in the GARK form \cref{eqn:consistent-Butcher-tableau}  by defining the stages 
\begin{equation} \label{eqn:Modified-Craig-Sneyd-stages}
	Y\comp{0} \coloneqq \begin{bmatrix}
		y_n \\ v_\nparts
	\end{bmatrix}, \qquad
	Y\comp{q} \coloneqq \begin{bmatrix}
		y_n\\v_{q}\\v_{\nparts}\\w_{q}
	\end{bmatrix}, \text{ for } q = 1, \dots, \nparts.
\end{equation}
The coefficients correspond to that of an ADI-GARK method:
\begin{align*}
	\A\comp{q,m}&=\A\I=\begin{bmatrix}
		0 & 0 &0 &0\\
		1-\theta & \theta & 0 & 0\\
		1-\theta & \theta & 0 & 0\\
		1-\mu-\theta & 0 & \mu & \theta
	\end{bmatrix}, \text{ for } m \leq q, \\
	\A\comp{q,m}&=\A\E = \begin{bmatrix}
		0 & 0 & 0 & 0\\
		1 & 0 & 0 & 0\\
		1-\theta & \theta & 0 & 0\\
		1-\mu & 0 & \mu & 0
	\end{bmatrix}, \text{ for } m > q, \\
	\b\comp{q}* &= \b\I*=\begin{bmatrix}1-\mu-\theta & 0 & \mu & \theta \end{bmatrix},\qquad 
	\c\comp{q}* = \c\I* = \begin{bmatrix} 0 & 1 & 1  & 1  \end{bmatrix}.
\end{align*}
The nonstiff partition introduces the following additional coefficients:
\begin{alignat*}{4}
	\A\comp{0,0} &= \begin{bmatrix}
		0 & 0 \\ 1 & 0
	\end{bmatrix}, & \quad
	\A\comp{0,q} &= \begin{bmatrix}
		0 & 0 & 0 & 0 \\
		1 - \theta & \theta & 0 & 0
	\end{bmatrix}, \\
	\A\comp{q,0} &= \begin{bmatrix}
		0 & 0 \\
		1 & 0 \\
		1 & 0 \\
		1 - \sigma - \mu & \sigma + \mu
	\end{bmatrix}, & \quad
	\b\comp{0} &= \begin{bmatrix}
		1 - \sigma - \mu \\  \sigma + \mu
	\end{bmatrix}, & \quad
		\c\comp{0} &= \begin{bmatrix}
		0 \\ 1
	\end{bmatrix}.
\end{alignat*}
\ifreport
Alternatively, the method can be expressed in lower triangular form
\begin{equation*}
	\resizebox{\textwidth}{!}{$
	\begin{butchertableau}{c|c:c:c:c:c:c}
		& f\comp{0}_1 & f\comp{1:\nparts}_1 & f\comp{1:\nparts}_2 & f\comp{0}_2 & f\comp{1:\nparts}_3 & f\comp{1:\nparts}_4 \\ \hline
		Y\comp{0}_1 & 0 & \zero{\nparts}^\tp & \zero{\nparts}^\tp & 0 & \zero{\nparts}^\tp & \zero{\nparts}^\tp \\ \hdashline
		Y\comp{1:\nparts}_1 & \zero{\nparts} & \zero{\nparts}{\nparts} & \zero{\nparts}{\nparts} & \zero{\nparts} & \zero{\nparts}{\nparts} & \zero{\nparts}{\nparts} \\ \hdashline 
		Y\comp{1:\nparts}_2 & \one{\nparts} & \one{\nparts}{\nparts} - \theta \, \tril{\nparts} & \theta \, \tril{\nparts} & \zero{\nparts} & \zero{\nparts}{\nparts} & \zero{\nparts}{\nparts} \\ \hdashline
		Y\comp{0}_2 & 1 & (1 - \theta) \, \one{\nparts}^{\tp} & \theta \, \one{\nparts}^{\tp} & 0 & \zero{\nparts}^\tp & \zero{\nparts}^\tp \\ \hdashline
		Y\comp{1:\nparts}_3 & \one{\nparts} & (1 - \theta) \, \one{\nparts}{\nparts} & \theta \, \one{\nparts}{\nparts} & \zero{\nparts} & \zero{\nparts}{\nparts} & \zero{\nparts}{\nparts} \\ \hdashline
		Y\comp{1:\nparts}_4 & (1 - \sigma - \mu) \, \one{\nparts} & (1 - \mu) \, \one{\nparts}{\nparts} - \theta \, \tril{\nparts} & \zero{\nparts}{\nparts} & (\sigma + \mu) \, \one{\nparts} & \mu \, \one{\nparts}{\nparts} & \theta \, \tril{\nparts} \\ \hline
		y_{n+1} & 1 - \sigma - \mu & (1 - \mu - \theta) \one{\nparts}^\tp & \zero{\nparts}^\tp & \sigma + \mu & \mu \one{\nparts}^\tp & \theta \one{\nparts}^\tp
	\end{butchertableau}.
	$}
\end{equation*}
\fi
%
\subsection{Hundsdorfer--Verwer scheme}
%
We consider the split system \cref{eqn:split-with-nonstiff} with one non-stiff component $f\comp{0}$.  The Hundsdorfer--Verwer scheme \cite{HouWel2009} reads:
\begin{equation}
	\label{eqn:splitting-Hundsdorfer-Verwer}
	\begin{split}
		v_0 &= y_n+\Dt\, f(t_n,y_n),\\
		v_{q} &= v_{q-1}+\theta \,\Dt\,\left( f\comp{q}(t_{n+1},v_{q})- f\comp{q}(t_{n},y_n)\right),\qquad q = 1, \ldots, \nparts,\\
		w_0&= v_{0} + \mu \, \Dt\,\Bigl(f(t_{n+1},v_\nparts)-f(t_n,y_n)\Bigr),\\
		w_{q} &= w_{q-1}+\theta \,\Dt\,\left( f\comp{q}(t_{n+1},w_{q})- f\comp{q}(t_{n+1},v_\nparts)\right),\qquad q = 1, \ldots, \nparts,\\
		y_{n+1}&= w_{\nparts}.
	\end{split}
\end{equation}
The method is cast in the GARK form \cref{eqn:consistent-Butcher-tableau} using the same stages as \cref{eqn:Modified-Craig-Sneyd-stages} and the coefficient matrices
\begin{align*}
	\A\comp{q,m}&=\A\I=\begin{bmatrix}0 & 0 &0 &0\\ 1-\theta & \theta & 0 & 0\\ 1-\theta & \theta & 0 & 0\\
	1-\mu & 0 & \mu-\theta & \theta
	\end{bmatrix}, \text{ for } m \leq q; \\
	\A\comp{q,m}&=\A\E = \begin{bmatrix}0 & 0 & 0 & 0\\1 & 0 & 0 & 0\\ 1-\theta & \theta & 0 & 0\\
	1-\mu & 0 & \mu & 0
	\end{bmatrix}, \text{ for } m > q, \\
	\b\comp{q}* &= \b\I*=\begin{bmatrix} 1-\mu & 0 & \mu-\theta & \theta \end{bmatrix}, \qquad
	\c\comp{q}* = \c\I* = \begin{bmatrix} 0 & 1 & 1  & 1  \end{bmatrix},
\end{align*}
together with
\begin{alignat*}{4}
	\A\comp{0,0} &= \begin{bmatrix}
		0 & 0 \\ 1 & 0
	\end{bmatrix}, & \quad
	\A\comp{0,q} &= \begin{bmatrix}
		0 & 0 & 0 & 0 \\
		1 - \theta & \theta & 0 & 0
	\end{bmatrix}, & \quad
	\A\comp{q,0} &= \begin{bmatrix}
		0 & 0 \\
		1 & 0 \\
		1 & 0 \\
		1 - \mu & \mu
	\end{bmatrix}, \\
	\b\comp{0} &= \begin{bmatrix}
		1-\mu \\ \mu
	\end{bmatrix}, & \quad
	\c\comp{0} &= \begin{bmatrix}
		0 \\ 1
	\end{bmatrix}.
\end{alignat*}
The Hundsdorfer--Verwer Splitting method \cref{eqn:splitting-Hundsdorfer-Verwer} is an ADI-GARK scheme, is stiffly accurate, and has order two iff $\mu = \frac{1}{2}$.  
\ifreport
Alternatively, the method can be expressed in lower triangular form
\begin{equation*}
	\resizebox{\textwidth}{!}{$
	\begin{butchertableau}{c|c:c:c:c:c:c}
		& f\comp{0}_1 & f\comp{1:\nparts}_1 & f\comp{1:\nparts}_2 & f\comp{0}_2 & f\comp{1:\nparts}_3 & f\comp{1:\nparts}_4 \\ \hline
		Y\comp{0}_1 & 0 & \zero{\nparts}^\tp & \zero{\nparts}^\tp & 0 & \zero{\nparts}^\tp & \zero{\nparts}^\tp \\ \hdashline
		Y\comp{1:\nparts}_1 & \zero{\nparts} & \zero{\nparts}{\nparts} & \zero{\nparts}{\nparts} & \zero{\nparts} & \zero{\nparts}{\nparts} & \zero{\nparts}{\nparts} \\ \hdashline 
		Y\comp{1:\nparts}_2 & \one{\nparts} & \one{\nparts}{\nparts} - \theta \, \tril{\nparts} & \theta \, \tril{\nparts} & \zero{\nparts} & \zero{\nparts}{\nparts} & \zero{\nparts}{\nparts} \\ \hdashline
		Y\comp{0}_2 & 1 & (1 - \theta) \, \one{\nparts}^{\tp} & \theta \, \one{\nparts}^{\tp} & 0 & \zero{\nparts}^\tp & \zero{\nparts}^\tp \\ \hdashline
		Y\comp{1:\nparts}_3 & \one{\nparts} & (1 - \theta) \, \one{\nparts}{\nparts} & \theta \, \one{\nparts}{\nparts} & \zero{\nparts} & \zero{\nparts}{\nparts} & \zero{\nparts}{\nparts} \\ \hdashline
		Y\comp{1:\nparts}_4 & (1 - \mu) \, \one{\nparts} & (1 - \mu) \, \one{\nparts}{\nparts} & \zero{\nparts}{\nparts} & \mu \, \one{\nparts} & \mu \, \one{\nparts}{\nparts} - \theta \, \tril{\nparts} & \theta \, \tril{\nparts} \\ \hline
		y_{n+1} & 1 - \mu & (1 - \mu) \, \one{\nparts}^\tp & \zero{\nparts}^\tp & \mu & (\mu - \theta) \, \one{\nparts}^\tp & \theta \, \one{\nparts}^\tp
	\end{butchertableau}.
	$}
\end{equation*}
\fi
%
\section{Classical operator splitting methods in the GARK framework}
\label{sec:operator-splitting-methods}

We next describe how classical operator splitting schemes appearing in the literature can be formulated within the GARK framework.

\subsection{The second order Strang splitting scheme}
%
A Strang splitting \cite{Strang_1968_splitting} integrates individual sub-systems in sequence, a symmetrical order. 
\ifreport
Consider the system \cref{eqn:additive-ode} with $\nparts = 2$. The classical Strang formulation for two partitions reads
\begin{alignat*}{4}
	v_1' &= f\comp{1}(v_{1}), \quad & v_1(t_n) &= y_n, \quad && t_n \le t \le t_n + \tfrac{\Dt}{2}, \\
	v_2' &= f\comp{2}(v_{2}), \quad & v_2(t_n ) &= v_1 \mleft(t_n + \tfrac{\Dt}{2} \mright), \quad && t_n \le t \le t_n + \Dt, \\
	v_3' &= f\comp{1}(v_{3}), \quad & v_3\mleft(t_n + \tfrac{\Dt}{2}\mright) &= v_2(t_n + \Dt), \quad && t_n + \tfrac{\Dt}{2} \le t \le t_n + \Dt, \\
	y_{n+1} &= v_3(t_n+\Dt). \quad &
\end{alignat*}
Consider that we solve numerically each integration with one step of an arbitrary, $s$-stage Runge--Kutta method $(A,b,c)$.  
If we take two half-steps to solve the second process $f\comp{2}$ in the middle of the integration, the permuted matrix of coefficients \cref{eqn:permuted-Butcher-tableau} reads
\begin{equation*}
	\begin{butchertableau}{c|c:c:c:c}
		&  f\comp{1}_{1:s} & f\comp{2}_{1:s} & f\comp{2}_{s+1:2s} & f\comp{1}_{s+1:2s} \\
		\hline
		Y\comp{1}_{1:s} & \frac{1}{2}\, A & \zero & \zero & \zero \\ \hdashline
		Y\comp{2}_{1:s} & \frac{1}{2}\,\one \, b^\tp& \frac{1}{2}\, A & \zero & \zero \\ \hdashline
		Y\comp{2}_{s+1:2s} & \frac{1}{2}\,\one \, b^\tp& \frac{1}{2}\,\one \, b^\tp&\frac{1}{2}\, A & \zero  \\ \hdashline
		Y\comp{1}_{s+1:2s} & \frac{1}{2}\,\one \, b^\tp& \frac{1}{2}\,\one \, b^\tp& \frac{1}{2}\,\one \, b^\tp&\frac{1}{2}\, A   \\
		\hline
		y_{n+1} & \frac{1}{2}\, b^\tp  & \frac{1}{2}\, b^\tp & \frac{1}{2}\, b^\tp & \frac{1}{2}\, b^\tp 
	\end{butchertableau}.
\end{equation*}
After reordering the stage, the standard form GARK Butcher tableau  \cref{eqn:consistent-Butcher-tableau} is
\begin{equation*}
	\begin{butchertableau}{c|c:c|c:c}
		\frac{1}{2}\,c & \frac{1}{2}\, A & \zero & \zero & \zero  \\
		\hdashline
		\frac{1}{2} \, (\one + c) & \frac{1}{2}\,\one \, b^\tp &\frac{1}{2}\, A & \frac{1}{2}\,\one \, b^\tp& \frac{1}{2}\,\one \, b^\tp   \\
		\hline
		\frac{1}{2}\,c & \frac{1}{2}\,\one \, b^\tp & \zero & \frac{1}{2}\, A & \zero  \\
		\hdashline
		\frac{1}{2} \, (\one + c) & \frac{1}{2}\,\one \, b^\tp & \zero & \frac{1}{2}\,\one \, b^\tp&\frac{1}{2}\, A   \\
		\hline
		& \frac{1}{2}\, b^\tp  & \frac{1}{2}\, b^\tp & \frac{1}{2}\, b^\tp & \frac{1}{2}\, b^\tp 
	\end{butchertableau}.
\end{equation*}
Combining the two middle half steps into a single full step leads to the following GARK Butcher tableau  \cref{eqn:consistent-Butcher-tableau}:
\begin{equation*}
	\begin{butchertableau}{c|c:c|c}
		\frac{1}{2}\,c & \frac{1}{2}\, A & \zero & \zero \\
		\hdashline
		\frac{1}{2} \, (\one + c) & \frac{1}{2}\,\one \, b^\tp & \frac{1}{2}\, A & \one \, b^\tp \\
		\hline
		c & \frac{1}{2}\,\one \, b^\tp & \zero &   A \\
		\hline
		& \frac{1}{2}\, b^\tp & \frac{1}{2}\, b^\tp & b^\tp 
	\end{butchertableau}.
\end{equation*}
\fi
For an arbitrary number $\nparts$ of partitions, the Strang split formulation \cite{Strang_1968_splitting} is
\begin{alignat*}{4}
	v_0 &= y_n \\
	v_q' &= f\comp{q}(t,v_{q}), & \quad v_q(t_n) &= v_{q-1}\mleft(t_n + \tfrac{\Dt}{2} \mright), & \quad & t_n \le t \le t_n + \tfrac{\Dt}{2}, \\
	& \qquad q = 1,\dots,\nparts, \\
	w_0 &= v_{\nparts}, \\
	w_q'&= f\comp{\nparts-q+1}(t,w_{q}), & \quad w_q \mleft(t_n + \tfrac{\Dt}{2} \mright) &= w_{q-1}(t_n + \Dt), & \quad & t_n+ \tfrac{\Dt}{2} \le t \le t_n + \Dt, \\
	&\qquad q = 1,\dots,\nparts, \\
	y_{n+1} &= w_\nparts(t_n+\Dt).
\end{alignat*}
Assume that each integration is carried out by one step of an arbitrary, $s$ stage Runge--Kutta scheme $(A,b,c)$.
The method is cast in the GARK form \cref{eqn:consistent-Butcher-tableau} by defining the stages 
\comment{Fixed the definition to include all $2s$ stages per partition}
\begin{equation*}
	Y\comp{q}=\begin{bmatrix} Y\comp{q}_{1:s}~(\textnormal{from integration of}~v_q) \\ Y\comp{q}_{s+1 : 2s}~(\textnormal{from integration of}~w_{\nparts-q+1}) \end{bmatrix}.
\end{equation*}
The Butcher tableau is of the form \cref{eqn:Butcher-tableau-structure-2} with the coefficient matrices
\begin{alignat*}{1}
	\A\comp{q,m}&=\A\L=\begin{smatrix}\frac{1}{2}\, \one \, b^\tp & \zero \\ \frac{1}{2}\, \one \, b^\tp  & \zero \end{smatrix}, \text{ for } m < q,
	 \\
	\A\comp{q,q}&=\A\D=\begin{smatrix}\frac{1}{2}\, A & \zero \\ \frac{1}{2}\, \one \, b^\tp  & \frac{1}{2}\,A \end{smatrix}, 
	\qquad
	\b\comp{q}= \b\D = \begin{smatrix} \frac{1}{2} b \\ \frac{1}{2} b \end{smatrix}, \\
	\A\comp{q,m}&=\A\U=\begin{smatrix} 0 & 0 \\\frac{1}{2}\, \one \, b^\tp&\frac{1}{2}\, \one \, b^\tp\end{smatrix}, \text{ for } m > q.
\end{alignat*}

\ifreport
The corresponding permuted matrix of coefficients \cref{eqn:permuted-Butcher-tableau}, with $m < q < r$, is
\begin{equation*}
	\resizebox{\textwidth}{!}{$
	\begin{butchertableau}{c|c|c:c|c|c:c|c|c:c|c}
		&  \dots & f\comp{m}_{1:s} & f\comp{m}_{s+1:2s} & \dots &  f\comp{q}_{1:s} & f\comp{q}_{s+1:2s} &  \dots &f\comp{r}_{1:s} & f\comp{r}_{s+1:2s} & \dots   \\
		\hline \hdashline \hline
		Y\comp{q}_{1:s} & \dots &  \frac{1}{2}\, \one \, b^\tp& \zero & \dots &\frac{1}{2} \, A & \zero & \dots &\zero  & \zero & \dots \\
		\hdashline
		Y\comp{q}_{s+1:2s} & \dots & \frac{1}{2}\, \one\,  b^\tp & \zero  & \dots & \frac{1}{2}\, \one \, b^\tp& \frac{1}{2} \, A & \dots & \frac{1}{2}\, \one \, b^\tp   & \frac{1}{2}\, \one \, b^\tp & \dots \\
		\hline\hdashline\hline
		y_{n+1} & \dots & \frac{1}{2} \, b^\tp  & \frac{1}{2} \, b^\tp & \dots & \frac{1}{2} \, b^\tp & \frac{1}{2} \, b^\tp & \dots & \frac{1}{2} \, b^\tp & \frac{1}{2} \, b^\tp & \dots
	\end{butchertableau}.
	$}
\end{equation*}
\fi

\subsection{High order Yoshida splitting schemes}

Yoshida proposed high order splitting schemes \cite{Yoshida_1990_splitting} that require backward-in-time integration. Let $y_{n+1} = S_{\Dt}\, y_n$ denote formally the Strang splitting solution operator. Then Yoshida's fourth order splitting is obtained by a repeated application of Strang solutions:
\begin{equation} \label{eqn:fourth_order_yoshida}
	y_{n+1} = S_{\theta\Dt}\,S_{(1-2\theta)\Dt}\,S_{\theta\Dt}\, y_n, \quad \theta = \frac{1}{2 - 2^{1/3}} \approx 1.35121.
\end{equation}
General Yoshida schemes applied to a two-way partitioned system read
\comment[id=2]{Removed definition of Yoshida splitting with undefined $F^{\{1\}}$ and $F^{\{2\}}$ notation}
%
%
%
\begin{align*}
	v_\ell(t_n + c_{\ell-1}^{\alpha}\,\Dt) &= \begin{cases}
	y_n & \ell = 1 \\
	w_{\ell-1}(t_n + c_{\ell-1}^{\beta}\,\Dt), & \text{otherwise}
	\end{cases} \\
	v_\ell'&= f\comp{1}(v_{\ell}), \qquad t_n + c_{\ell-1}^{\alpha}\,\Dt  \le t \le t_n + c_{\ell}^{\alpha}\,\Dt, \\
	w_\ell(t_n + c_{\ell-1}^{\beta}\,\Dt) &= v_\ell(t_n + c_{\ell}^{\alpha}\,\Dt), \\
	w_\ell'&= f\comp{2}(w_{\ell}), \qquad  t_n + c_{\ell-1}^{\beta}\,\Dt  \le t \le t_n + c_{\ell}^{\beta}\,\Dt, \\
	& \qquad \ell = 1, \dots, L \\
	y_{n+1} &= w_L(t_n+h),
\end{align*}
where
\begin{equation*}
	c_0^{\alpha} = 0, \quad c_0^{\beta} = 0, \quad
	c_\ell^{\alpha} = \sum_{i=1}^{\ell} \alpha_i, \quad c_\ell^{\beta} = \sum_{i=1}^{\ell} \beta_i, \quad
	c_L^{\alpha} = 1, \quad c_L^{\beta} = 1.
\end{equation*}
The scheme of order four \cref{eqn:fourth_order_yoshida} uses $L=4$ and the coefficients
\begin{alignat*}{3}
	\alpha_1 &= \alpha_4 = \frac{1}{2 \,( 2 - 2^{1/3} )}, & \quad
	\alpha_2 &= \alpha_3 = \frac{1-2^{1/3}}{2 \,( 2 - 2^{1/3} )}, \\
	\beta_1 &= \beta_3 = \frac{1}{2 - 2^{1/3}}, & \quad
	\beta_2 &= -\frac{2^{1/3}}{2 - 2^{1/3}}, & \quad
	\beta_4 &= 0.
\end{alignat*}

Assume that each integration is carried out with a $s$ stage Runge--Kutta scheme $(A,b,c)$.  \added[id=2]{Then, we can define the GARK stages}
\comment[id=2]{Define the GARK stages for Yoshida splitting.}
\begin{align*}
	Y\comp{1} &= \begin{bmatrix} Y\comp{1}_{1:s}~(\textnormal{from integration of}~v_1) \\ \vdots \\ Y\comp{1}_{(L-1) s + 1 : L s}~(\textnormal{from integration of}~v_{L}) \end{bmatrix}, \\
	Y\comp{2} &= \begin{bmatrix} Y\comp{2}_{1:s}~(\textnormal{from integration of}~w_1) \\ \vdots \\ Y\comp{2}_{(L-1) s + 1 : L s}~(\textnormal{from integration of}~w_{L}) \end{bmatrix}.
\end{align*}
\ifreport
The GARK tableau \cref{eqn:permuted-Butcher-tableau}, in the order of stage computations, reads
\begin{equation*}
	\resizebox{\textwidth}{!}{$
	\begin{butchertableau}{c|c:c|c:c|c:c|c:c}
		& f\comp{1}_{1:s} & f\comp{2}_{1:s} &  f\comp{1}_{s+1:2s} & f\comp{2}_{s+1:2s} &  f\comp{1}_{2s+1:3s} & f\comp{2}_{2s+1:3s} & f\comp{1}_{3s+1:4s} & f\comp{2}_{3s+1:4s} \\
		\hline
		Y\comp{1}_{1:s} & \alpha_1 \, A &  &   &  &  &  &  &  \\ \hdashline
		Y\comp{2}_{1:s} & \alpha_1 \, \one \, b^\tp & \beta_1 \, A &   &  &  &  &  &  \\
		\hdashline
		Y\comp{1}_{s+1:2s} & \alpha_1 \, \one \, b^\tp & \beta_1\, \one \, b^\tp &  \alpha_2 \, A  &  &  &  &  &  \\ \hdashline
		Y\comp{2}_{s+1:2s} & \alpha_1 \, \one \, b^\tp & \beta_1\, \one \, b^\tp &  \alpha_2\, \one \, b^\tp  & \beta_2 \, A &  &  &  &  \\
		\hdashline
		Y\comp{1}_{2s+1:3s} & \alpha_1 \, \one \, b^\tp & \beta_1\, \one \, b^\tp &  \alpha_2\, \one \, b^\tp  & \beta_2\, \one \, b^\tp &  \alpha_3 \, A &  &  &  \\ \hdashline
		Y\comp{2}_{2s+1:3s} & \alpha_1 \, \one \, b^\tp & \beta_1\, \one \, b^\tp &  \alpha_2\, \one \, b^\tp  & \beta_2\, \one \, b^\tp &  \alpha_3\, \one \, b^\tp&  \beta_3 \, A &  &  \\
		\hdashline
		Y\comp{1}_{3s+1:4s} & \alpha_1 \, \one \, b^\tp & \beta_1\, \one \, b^\tp &  \alpha_2\, \one \, b^\tp  & \beta_2\, \one \, b^\tp &  \alpha_3\, \one \, b^\tp&  \beta_3\, \one \, b^\tp & \alpha_4 \, A  &  \\ \hdashline
		Y\comp{2}_{3s+1:4s} & \alpha_1 \, \one \, b^\tp & \beta_1\, \one \, b^\tp &  \alpha_2\, \one \, b^\tp  & \beta_2\, \one \, b^\tp &  \alpha_3\, \one \, b^\tp&  \beta_3\, \one \, b^\tp & \alpha_4 \, \one \, b^\tp  &  \beta_4 \, A  \\
		\hline
		y_{n+1} &\alpha_1 \, b^\tp & \beta_1\, b^\tp &  \alpha_2\, b^\tp  & \beta_2\,  b^\tp &  \alpha_3\,  b^\tp&  \beta_3 \, b^\tp & \alpha_4 \, b^\tp  &  \beta_4 \, b^\tp
	\end{butchertableau}.
	$}
\end{equation*}
\fi
The GARK tableau \cref{eqn:consistent-Butcher-tableau} for \cref{eqn:fourth_order_yoshida} reads
\begin{equation*}
	\resizebox{\textwidth}{!}{$
	\begin{butchertableau}{c|c:c:c:c|c:c:c:c}
		c_0^{\alpha} \, \one + \alpha_1 \, c & \alpha_1 \, A &  &   &  &  &  &  &  \\
		\hdashline
		c_1^{\alpha} \, \one + \alpha_2 \, c  & \alpha_1 \, \one \, b^\tp  &  \alpha_2 \, A  &  &  & \beta_1\, \one \, b^\tp&  &  &  \\
		\hdashline
		c_2^{\alpha} \, \one + \alpha_3 \, c & \alpha_1 \, \one \, b^\tp  &  \alpha_2\, \one \, b^\tp   &  \alpha_3 \, A & & \beta_1\, \one \, b^\tp& \beta_2\, \one \, b^\tp &  &  \\
		\hdashline
		c_3^{\alpha} \, \one + \alpha_4 \, c  & \alpha_1 \, \one \, b^\tp  &  \alpha_2\, \one \, b^\tp   &  \alpha_3\, \one \, b^\tp & \alpha_4 \, A  & \beta_1\, \one \, b^\tp & \beta_2\, \one \, b^\tp &  \beta_3\, \one \, b^\tp &  \\
		\hline
		c_0^{\beta} \, \one + \beta_1 \, c & \alpha_1 \, \one \, b^\tp  &   &  &  & \beta_1 \, A&  &  &  \\
		\hdashline
		c_1^{\beta} \, \one + \beta_2 \, c & \alpha_1 \, \one \, b^\tp  &  \alpha_2\, \one \, b^\tp   &  &  & \beta_1\, \one \, b^\tp& \beta_2 \, A&  &  \\
		\hdashline
		c_2^{\beta} \, \one + \beta_3 \, c & \alpha_1 \, \one \, b^\tp  &  \alpha_2\, \one \, b^\tp   &  \alpha_3\, \one \, b^\tp && \beta_1\, \one\,  b^\tp & \beta_2\, \one \, b^\tp &  \beta_3 \, A &  \\
		\hdashline
		c_3^{\beta} \, \one + \beta_4 \, c & \alpha_1 \, \one \, b^\tp  &  \alpha_2\, \one \, b^\tp   &  \alpha_3\, \one \, b^\tp & \alpha_4 \, \one \, b^\tp  & \beta_1\, \one \, b^\tp & \beta_2\, \one \, b^\tp &  \beta_3\, \one \, b^\tp &  \beta_4 \, A  \\
		\hline
		&\alpha_1 \, b^\tp  &  \alpha_2\, b^\tp   &  \alpha_3 \, b^\tp & \alpha_4 \, b^\tp & \beta_1\, b^\tp & \beta_2 \, b^\tp&  \beta_3 \, b^\tp &  \beta_4 \, b^\tp
	\end{butchertableau}.
	$}
\end{equation*}

\section{Fractional step Runge--Kutta methods in the GARK framework}
\label{sec:FSRK}

Consider the additively split ODE \cref{eqn:split-with-nonstiff} where the first $\nparts$ components are one-dimensional split linear operators, and the last partition is a nonlinear term:
\begin{equation}
	\label{eqn:additive-ode-ADI}
	 y'= f(t,y) = \sum_{m=1}^{\nparts} \left( \mathbf{L}\comp{m}(t)\,y + \phi\comp{m}(t)\right) + g(t,y).
\end{equation}
Specifically, $\mathbf{L}\comp{m}(t)$ represents the diffusion operator in the $m$ spatial direction, $\phi\comp{m}(t)$ the corresponding directional boundary conditions and source terms, and $g(t,y)$ is a nonlinear term.

Fractional step Runge--Kutta (FSRK) methods \cite{Bujanda_2003_FSRK,Bujanda_2006_FSRK,Bujanda_2004_FSRK,Bujanda_2007_FSRK} solve \cref{eqn:additive-ode-ADI} in the alternating implicit fashion
%
\begin{align*}
	Y_i &= y_n + \Dt \sum_{j=1}^{i} a\comp{m_j}_{i,j} \left( \mathbf{L}\comp{m_j}(T_j)\,Y_j + \phi\comp{m_j}(T_j)\right) + \Dt \sum_{j=1}^{i-1} a\comp{0}_{i,j} \, g(T_j,Y_j), \\
	y_{n+1} &= y_n + \Dt \sum_{j=1}^s b\comp{m_j}_{j} \left( \mathbf{L}\comp{m_j}(T_j)\,Y_j + \phi\comp{m_j}(T_j)\right) + \Dt \sum_{j=1}^s b\comp{0}_{j} \, g(T_j,Y_j),
\end{align*}
where $T_j = t_n + c\comp{m_j}_j$.
We note that only one dimension $m_j$ is associated with each stage $j$, and that each stage solution requires solving a linear system corresponding to a one-dimensional problem. 
\ifreport
This linear system has the form
\begin{align*}
	&\left(\eye{\nvar} - \Dt \, a\comp{m_i}_{i,i} \, \mathbf{L}\comp{m_i}(T_i) \right)\,Y_i = y_n + \Dt\,a\comp{m_i}_{i,i} \, \phi\comp{m_i}(T_i) \\
	& \qquad + \Dt \sum_{j=1}^{i-1} a\comp{m_j}_{i,j} \left( \mathbf{L}\comp{m_j}(T_j)\,Y_j + \phi\comp{m_j}(T_j)\right) + \Dt \sum_{j=1}^{i-1} a\comp{0}_{i,j} \, g(T_j,Y_j).
\end{align*}
\fi
As stated in \cite{Bujanda_2003_FSRK}, FSRK are a special case of the additive Runge--Kutta (ARK) method.
\ifreport
Specifically,
\begin{align*}
	Y_i &= y_n + \Dt \sum_{m=1}^{\nparts} \sum_{j=1}^{i} a\comp{m}_{i,j} \left( \mathbf{L}\comp{m}(T_j)\,Y_j + \phi\comp{m}(T_j) \right) + \Dt \sum_{j=1}^{i-1} a\comp{0}_{i,j} \, g(T_j,Y_j), \\
	y_{n+1} &= y_n + \Dt \sum_{m=1}^{\nparts} \sum_{j=1}^s b\comp{m}_{j} \left( \mathbf{L}\comp{m}(T_j)\,Y_j + \phi\comp{m}(T_j)\right) + \Dt \sum_{j=1}^s b\comp{0}_{j} \, g(T_j,Y_j),
\end{align*}
where $a\comp{m}_{i,j} \ne 0$ for at most one of the dimensions $m_j$.  The ARK formulation, however, requires awkward padding of the tableau with zeros in order to have in implicitness in one partition at a time.  
\fi
The GARK formulation provides a simpler and more compact representation.  First, let us define the index sets
\begin{equation} \label{eqn:index_sets}
	S_{q} = \left\{ \ell \in \{1, \dots, s\} \mid m_\ell = q \right\} = \left\{ \ell_{q,1}, \dots, \ell_{q,s\comp{q}} \right\},
	\qquad
	q = 1, \dots, \nparts.
\end{equation}
This provides a way to map stage $i \in \{1, \dots, s\}$ of a FSRK method to a stage $j \in \{1, \dots, s\comp{q}\}$ in partition $q$ of a GARK method.  Now the GARK coefficients can be expressed as
\begin{equation} \label{eqn:FSRK_to_GARK}
	\A\comp{q,m} = \left[ a\comp{m}_{\ell_{q,i}, \ell_{m,j}} \right]_{i=1,\dots,s\comp{q}}^{j=1,\dots,s\comp{m}},
	\quad
	\b\comp{q} = \left[ b\comp{q}_{\ell_{q,i}} \right]_{i=1,\dots,s\comp{q}},
	\quad
	q, m = 1, \dots, \nparts,
\end{equation}
with partition zero using the coefficients
\begin{alignat*}{2}
	\A\comp{0,0} &= \left[ a\comp{0}_{i,j} \right]_{i=1,\dots,s}^{j=1,\dots,s},
	& \qquad
	\A\comp{0,q} &= \left[ a\comp{0}_{i,\ell_{q,j}} \right]_{i=1,\dots,s}^{j=1,\dots,s\comp{q}}, \\
	\A\comp{q,0} &= \left[ a\comp{0}_{\ell_{q,i},j} \right]_{i=1,\dots,s\comp{q}}^{j=1,\dots,s},
	& \qquad
	\b\comp{0} &= \left[ b\comp{0}_{i} \right]_{i=1,\dots,s},\quad q = 1, \dots, \nparts.
\end{alignat*}

\begin{example} \comment[id=2]{A new example to explain converting FSRK to GARK}
    Consider the second order FSRK used in \cite[Section 5.1]{portero2004avoiding} with coefficients
    \begin{alignat*}{2}
        \A\comp{1} &= \begin{bmatrix}
            0 & 0 & 0 \\
            0 & \frac{1}{2} & 0 \\
            0 & 1 & 0
        \end{bmatrix}, & \qquad
        \A\comp{2} &= \begin{bmatrix}
            0 & 0 & 0 \\
            \frac{1}{2} & 0 & 0 \\
            \frac{1}{2} & 0 & \frac{1}{2}
        \end{bmatrix}, \\
        \b\comp{1}* &= \begin{bmatrix}
            0 & 1 & 0
        \end{bmatrix}, & \qquad
        \b\comp{2}* &= \begin{bmatrix}
            \frac{1}{2} & 0 & \frac{1}{2}
        \end{bmatrix}.
    \end{alignat*}
    This method is for system with $N = 2$ partitions, and we will ignore partition zero, i.e., $g(t, y) = 0$.  There are $s = 3$ stages which map to partition $1$ or $2$ by
    \begin{equation*}
        m_j = \begin{cases}
            1, & j = 2, \\
            2, & j = 1 \text{ or } 3.
        \end{cases}
    \end{equation*}

    To cast this scheme into the GARK framework, we compute the index sets \cref{eqn:index_sets}:
    \begin{equation*}
        S_1 = \{ \ell_{1,1} \} = \{ 2 \},
        \qquad
        S_2 = \{ \ell_{2,1}, \ell_{2,2} \} = \{ 1, 3 \}.
    \end{equation*}
    Thus, $s\comp{1} = | S_1 | = 1$ and $s\comp{2} = | S_2 | = 2$.  By \cref{eqn:FSRK_to_GARK}, the GARK tableau is given by
    \begin{equation*}
        \begin{butchertableau}{c|cc}
            \frac{1}{2} & \frac{1}{2} & 0 \\ \hline
            0 & 0 & 0 \\
            1 & \frac{1}{2} & \frac{1}{2} \\ \hline
            1 & \frac{1}{2} & \frac{1}{2}
        \end{butchertableau}
    \end{equation*}
    Note that the FSRK formulation requires columns of zeros in $\A\comp{1}$ and $\A\comp{2}$, whereas the GARK formulation is more compact.  By reading the diagonal blocks of the GARK tableau, we can immediately see this method couples the implicit midpoint method with the implicit trapezoidal method.
\end{example}
\section{High-order ADI-GARK methods}
\label{sec:new_methods}

In this section we develop two new ADI-GARK methods (\Cref{definition:ADI-GARk}) of order three and four. The GARK order conditions \cref{eqn:GARK-order-conditions} particularized to an IMIM-GARK method with structure \cref{eqn:Butcher-tableau-structure-2} lead to the following result.

\begin{theorem}[Order conditions for the special class of IMIM-GARK]
    \label{thrm:ADI-GARK-OC}
	Consider an IMIM-GARK method of type \cref{eqn:Butcher-tableau-structure-2} that satisfies the internal consistency condition \cref{eqn:internal-consistency}. Then we have the following:
	\begin{itemize}
		\item The method has order $p \in \{1,2,3\}$ iff each component Runge--Kutta method $(\A\D,\b\D,\c\D)$, $(\A\L,\b\D,\c\D)$, and $(\A\U,\b\D,\c\D)$ has order at least $p$.
		\item The method has order $p=4$ iff each component Runge--Kutta method  $(\A\D,\b)$, $(\A\L,\b\D)$, and $(\A\U,\b\D)$ has order at least $4$, and, in addition, the following coupling order conditions are satisfied:
		\begin{subequations}
			\label{eqn:GARK-coupling-4}
			\begin{alignat}{2}
				\label{eqn:GARK-coupling-4a}
				\b\D*  \A\D   \A\U   \c\D &=  \frac{1}{24},  \qquad & \b\D*   \A\U   \A\D   \c\D &=  \frac{1}{24}, \\
				\label{eqn:GARK-coupling-4b}
				\b\D*   \A\L   \A\U   \c\D& =  \frac{1}{24},  \qquad & \b\D*   \A\U   \A\L   \c\D &=  \frac{1}{24}, \\
				\label{eqn:GARK-coupling-4c}
				\b\D*   \A\D   \A\L   \c\D &=  \frac{1}{24},  \qquad & \b\D*   \A\L   \A\D   \c\D &=  \frac{1}{24}.
			\end{alignat}
		\end{subequations}
	\end{itemize}
\end{theorem}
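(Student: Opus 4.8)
The plan is to obtain the result by specializing the general GARK order conditions \cref{eqn:GARK-order-conditions} to the block structure \cref{eqn:Butcher-tableau-structure-2}, exploiting two structural facts: every solution-weight block equals the common vector $\b$, and every coupling block $\A\comp{q,m}$ equals $\A\D$, $\A\L$, or $\A\U$ according to whether $q=m$, $q>m$, or $q<m$. First I would record that, for $\nparts \ge 2$, internal consistency \cref{eqn:internal-consistency} forces the abscissae of all three components to coincide, $\A\D\one = \A\L\one = \A\U\one = \c$, so that $\c\comp{q} = \c$ for every partition $q$. This common abscissa vector is exactly what lets the coupling conditions collapse into statements phrased purely in terms of $\b$, $\c$, and the three blocks.

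Next I would process the order conditions grade by grade. For orders one and two the conditions involve only $\b$ and $\c$, which are shared, so they reduce to the single scalar conditions $\b^\tp\one = 1$ and $\b^\tp\c = \tfrac12$ common to all three component methods. At order three the bushy-tree condition again involves only $\b$ and $\c$ (the componentwise square of $\c$), while the single remaining coupling condition $\b^\tp\A\comp{q,m}\c = \tfrac16$, ranging over all $q,m$, produces exactly the three scalar conditions $\b^\tp\A\D\c = \b^\tp\A\L\c = \b^\tp\A\U\c = \tfrac16$. These are precisely the order-three Runge--Kutta conditions for $(\A\D,\b)$, $(\A\L,\b)$, $(\A\U,\b)$ that are not already implied by the shared lower-order conditions. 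Since the GARK conditions are necessary and sufficient, this establishes the first bullet: for $p\in\{1,2,3\}$ the method has order $p$ iff each of the three component methods has order at least $p$.

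The order-four analysis is the crux. The conditions attached to trees with a single internal coupling edge, namely $\b^\tp(\c\times \A\comp{q,m}\c) = \tfrac18$ and $\b^\tp\A\comp{q,m}(\c\times\c) = \tfrac1{12}$ (with $\times$ the componentwise product), once more split into one condition per block value and reproduce the remaining order-four Runge--Kutta conditions for each component. The genuinely coupled condition is $\b^\tp\A\comp{q,m}\A\comp{m,l}\c = \tfrac1{24}$, and the main step is to enumerate the nine orderings of the triple $(q,m,l)$: the left factor is fixed by comparing $q$ with $m$ and the right factor by comparing $m$ with $l$. The three monochromatic orderings $q=m=l$, $q>m>l$, $q<m<l$ yield $\b^\tp\A\D\A\D\c$, $\b^\tp\A\L\A\L\c$, $\b^\tp\A\U\A\U\c$, each equal to $\tfrac1{24}$, which are the final order-four conditions for the three component methods; the six mixed orderings yield exactly the six conditions \cref{eqn:GARK-coupling-4}. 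I would also verify that for $\nparts \ge 3$ all nine orderings are attained, so the list is exhaustive with no spurious entries, while for $\nparts = 2$ the two pure cases $\A\L\A\L$ and $\A\U\A\U$ are not realized as couplings but remain required by the order-four conditions on the individual components, which keeps the statement uniform in $\nparts$.

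The principal obstacle is the bookkeeping in this final enumeration: matching the case-split of $(q,m,l)$ precisely, with no double counting and no gaps, to the three pure component conditions and the six boxed coupling conditions, and confirming that under internal consistency the general GARK order-four conditions contain nothing beyond these. A secondary subtlety is the necessity direction of each equivalence, which follows because, once a component method is fixed, each block independently realizes the relevant elementary weight as a free scalar, so no condition can be implied away by the others.
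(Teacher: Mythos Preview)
Your proposal is correct and follows essentially the same route as the paper: specialize the general GARK order conditions \cref{eqn:GARK-order-conditions} to the block structure \cref{eqn:Butcher-tableau-structure-2} using $\b\comp{\sigma}=\b$ and, via internal consistency, $\c\comp{\sigma,\mu}=\c$, then observe that through order three only the individual component conditions survive while at order four the tall-tree condition $\b^\tp\A\comp{\sigma,\lambda}\A\comp{\lambda,\nu}\c=\tfrac{1}{24}$ produces the six mixed couplings \cref{eqn:GARK-coupling-4} in addition to the three pure ones. Your explicit nine-case enumeration and your remark on the $\nparts=2$ edge case (where the $\A\L\A\L$ and $\A\U\A\U$ orderings are not realized as GARK couplings) are more careful than the paper's own two-sentence argument, which simply asserts the outcome.
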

\begin{proof}
	We set $\b\comp{\sigma} = \b\D $ and \added[id=2]{$\A\comp{\sigma,\mu} \one{s\comp{\mu}} = \c\D$} for all $\sigma,\mu \in \{  \text{L},\text{D}, \text{U} \}$ in \cref{eqn:GARK-order-conditions}. If the  base methods satisfy their  own order conditions, no third order coupling conditions  remain.  At order four, the only remaining coupling conditions are listed in \cref{eqn:GARK-coupling-4}. \qed 
\end{proof}

\begin{remark}
The order conditions for ADI-GARK methods of \Cref{definition:ADI-GARk}, as well as those for parallel ADI-GARK methods of \Cref {definition:P-ADI-GARk}, reduce to \cref{eqn:GARK-coupling-4a}, since \cref{eqn:GARK-coupling-4b} and \cref{eqn:GARK-coupling-4c} are redundant. 
\end{remark}

\subsection{\added[id=1]{(Parallel)} ADI-GARK method of order 3}
\added[id=2]{For the derivation of a third order ADI-GARK method, we start by selecting an implicit scheme.  We use the optimal, L-stable, 4 stage ESDIRK method described in \cite[Section 5.1.1]{kennedy2016diagonally}.  For the explicit counterpart, it must use the $\b\I$ and $\c\I$ coefficients, leaving $\A\E$ as free parameters.  \Cref{thrm:ADI-GARK-OC} requires the explicit method to satisfy classical order conditions up to order three, and we also impose the simplifying assumption $D(1)$ \cite[page 208]{hairer1993solving}.  The one remaining parameter in $\A\E$ is determined by}
\begin{equation*}
    \b\I* \A\E \A\E \c\I = \frac{5}{268}.
\end{equation*}
\added[id=2]{This ensures the linear stability function satisfies $|R( [z,z] )| \leq 1$ for all $z$ in the left-half plane.  This stability region is plotted in \cref{fig:stab-Order3}, and the method coefficients are listed in \cref{tab:ADI-GARK-3}.}  \added[id=1]{By \cref{rem:serial_parallel}, the coefficients also define a third order parallel ADI-GARK method.}
\begin{table}[h]
\centering
\begin{align*}
	\A\I &= \begin{bmatrix}
		0 & 0 & 0 & 0 \\
		\gamma  & \gamma  & 0 & 0 \\
		\frac{215 \gamma +424}{2624-1536 \gamma } & \frac{264-841 \gamma }{1536 \gamma +448} & \gamma  & 0 \\
		\frac{2 \gamma +1}{4 \gamma +8} & \frac{31-14 \gamma }{352-900 \gamma } & \frac{320 \gamma +224}{575-477 \gamma } & \gamma  \\
	\end{bmatrix}, \\
	\A\E &= \begin{bmatrix}
		0 & 0 & 0 & 0 \\
		2 \gamma  & 0 & 0 & 0 \\
		\frac{12526987 \gamma +655304}{8876160 \gamma +7175968} & \frac{15 (215 \gamma +152)}{2144 (92 \gamma -9)} & 0 & 0 \\
		\frac{2370311 \gamma -563481}{134 (17071 \gamma +921)} & \frac{380783-137789 \gamma }{134 (17727 \gamma -15511)} & \frac{1000-304 \gamma }{1371 \gamma +379} & 0 \\
	\end{bmatrix}, \\
	\b\I* &= \begin{bmatrix}
		\frac{2 \gamma +1}{4 \gamma +8} &
		\frac{31-14 \gamma }{352-900 \gamma } &
		\frac{320 \gamma +224}{575-477 \gamma } &
		\gamma
	\end{bmatrix}, \\
	\c\I* &= \begin{bmatrix}
		0 & 2 \gamma & \frac{\gamma +2}{4} & 1 
	\end{bmatrix}.
\end{align*}
\caption{Coefficients of the new third order \added[id=1]{(parallel)} ADI-GARK method. Here $\gamma \approx 0.43586652150845900$ is the middle root of the polynomial $0 = -1 + 9 \gamma - 18 \gamma^2 + 6 \gamma^3$.}
\label{tab:ADI-GARK-3}
\end{table}

\subsection{\added[id=1]{(Parallel)} ADI-GARK method of order 4}
\added[id=2]{We follow a similar method derivation process for a fourth order ADI-GARK method.  The implicit part is the L-stable method ESDIRK4(3)6L[2]SA from \cite[Table 16]{kennedy2016diagonally}.  This must be paired with a six stage explicit Runge--Kutta method of order four.  Again, we enforce the first column simplifying assumption $D(1)$ but now need the coupling conditions \cref{eqn:GARK-coupling-4}.  This leaves four unspecified coefficients that we use to control the stability.  We minimize the value of $|R( [z,z] )|$ at a sample of points along the imaginary axis so that it is stable in the entire left-half plane.  \Cref{tab:ADI-GARK-4} gives the resulting coefficients, and \cref{fig:stab-Order4} plots its stability.}
\begin{landscape}
\begin{table}[h]
\centering
	\begin{align*}
		\A\I &= \begin{bmatrix}
			0 & 0 & 0 & 0 & 0 & 0 \\
			\frac{1}{4} & \frac{1}{4} & 0 & 0 & 0 & 0 \\
			\frac{1-\sqrt{2}}{8} & \frac{1-\sqrt{2}}{8} & \frac{1}{4} & 0 & 0 & 0 \\
			\frac{5-7 \sqrt{2}}{64} & \frac{5-7 \sqrt{2}}{64} & \frac{7 \left(\sqrt{2}+1\right)}{32} & \frac{1}{4} & 0 & 0 \\
			\frac{-54539 \sqrt{2}-13796}{125000} & \frac{-54539 \sqrt{2}-13796}{125000} & \frac{132109 \sqrt{2}+506605}{437500} & \frac{166 \left(376 \sqrt{2}-97\right)}{109375} & \frac{1}{4} & 0 \\
			\frac{1181-987 \sqrt{2}}{13782} & \frac{1181-987 \sqrt{2}}{13782} & \frac{47 \left(1783 \sqrt{2}-267\right)}{273343} & \frac{16 \left(-3525 \sqrt{2}+22922\right)}{571953} & \frac{15625 \left(-376 \sqrt{2}-97\right)}{90749876} & \frac{1}{4} \\
		\end{bmatrix}, \\
		\A\E &= \resizebox{1.48\textwidth}{!}{$\begin{bmatrix}
			0 & 0 & 0 & 0 & 0 & 0 \\
			\frac{1}{2} & 0 & 0 & 0 & 0 & 0 \\
			\frac{4}{7}-\frac{1}{2 \sqrt{2}} & -\frac{1}{14} & 0 & 0 & 0 & 0 \\
			\frac{192440351 \sqrt{2}+245255777}{1090446224} & \frac{1059385241-192440351 \sqrt{2}}{1090446224} & -\frac{4}{7} & 0 & 0 & 0 \\
			\frac{3246103358815879 \sqrt{2}-4074461458752694}{1911688536450000} & \frac{15031561460125012-11088311262828073 \sqrt{2}}{1911688536450000} & \frac{1307034650668699 \sqrt{2}-1700986476469053}{318614756075000} & \frac{11}{17} & 0 & 0 \\
			\frac{2357123976102849118-3355327406349634955 \sqrt{2}}{1982691401525245488} & \frac{4815717108798877157 \sqrt{2}-9817340273693398308}{1982691401525245488} & \frac{3722435241465127195-759937254896120301 \sqrt{2}}{991345700762622744} & \frac{7576400 \sqrt{2}+387641523}{385686973} & \frac{625 \left(376 \sqrt{2}+97\right)}{22687469} & 0 \\
		\end{bmatrix}$}, \\
		\b\I &= \begin{bmatrix}
			\frac{1181-987 \sqrt{2}}{13782} & \frac{1181-987 \sqrt{2}}{13782} & \frac{47 \left(1783 \sqrt{2}-267\right)}{273343} & \frac{16 \left(-3525 \sqrt{2}+22922\right)}{571953} & \frac{15625 \left(-376 \sqrt{2}-97\right)}{90749876} & \frac{1}{4}
		\end{bmatrix}, \\
		\c\I &= \begin{bmatrix}
			0 & \frac{1}{2} & \frac{2-\sqrt{2}}{4} & \frac{5}{8} & \frac{26}{25} & 1
		\end{bmatrix}.
	\end{align*}
\caption{Coefficients of the new fourth order \added[id=1]{(parallel)} ADI-GARK method.}
\label{tab:ADI-GARK-4}
\end{table}
\end{landscape}

\begin{figure}
    \centering
    \begin{subfigure}[b]{0.45\textwidth}
        \centering
            \includegraphics[width=4.5cm]{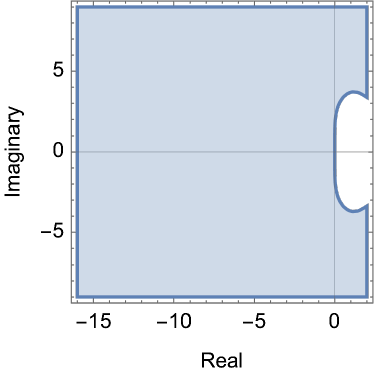}
            \caption{Order 3 ADI-GARK method from \cref{tab:ADI-GARK-3}}
            \label{fig:stab-Order3}
    \end{subfigure}
	\hfil
    \begin{subfigure}[b]{0.45\textwidth}
        \centering
            \includegraphics[width=4.75cm]{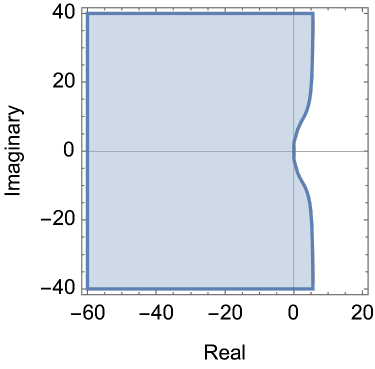}
            \caption{Order 4 ADI-GARK method from \cref{tab:ADI-GARK-4}}
            \label{fig:stab-Order4}
\end{subfigure}
\caption{\added[id=2]{The linear stability regions for new ADI-GARK methods applied to a two-way partitioned system with partitions of equal stiffness.  That is, the $z \in \Cplx$ such that $|R( [z, z] )| \leq 1$.}}
\label{fig:stab-plots}
\end{figure}

	\section{Numerical experiments}
\label{sec:numerical_experiments}
We test the accuracy of the newly developed ADI-GARK methods on two parabolic PDEs from \cite[Section 7]{Sandu_2019_GLM-ADI}.  The first is the two-dimensional problem
\begin{equation} \label{eqn:2d_heat_eq}
	\begin{split}
		u_t &= u_{xx} + u_{yy} + h(x,y,t) \\
		h(x,y,t) &= e^t (1-x) x (1-y) y+e^t \left(\left(x+\tfrac{1}{3}\right)^2+\left(y+\tfrac{1}{4}\right)^2-4\right) \\
		&\quad + 2 e^t  (1-x) x+2 e^t (1-y) y,
	\end{split}
\end{equation}
posed on the unit square $[0, 1] \times [0, 1]$.  Boundary and initial conditions come from the exact solution
\begin{equation*}
	u(x,y,t) = e^t (1-x) x (1-y) y+e^t \left(\left(x+\tfrac{1}{3}\right)^2+\left(y+\tfrac{1}{4}\right)^2\right).
\end{equation*}
The second problem is three-dimensional and reads
\begin{equation} \label{eqn:3d_heat_eq}
	\begin{split}
	    u_t &= u_{xx} + u_{yy}+ u_{zz} + g(x,y,z,t), \\
	    g(x,y,z,t) &= e^t (1-x) x (1-y) y (1-z) z +2 e^t (1-x) x (1-y) y \\
	    &\quad +2 e^t (1-x) x (1-z) z +2 e^t (1-y) y (1-z)z-6 e^t\\
	    & \quad +e^t \left(\left(x+\tfrac{1}{3}\right)^2+\left(y+\tfrac{1}{4}\right)^2+\left(z+\tfrac{1}{2
	    }\right)^2\right),
    \end{split}
\end{equation}
on the unit cube $[0, 1] \times [0, 1] \times [0, 1]$.  For \cref{eqn:3d_heat_eq} the exact solution is
\begin{equation*}
	\begin{split}
		u(x,y,z,t) &=e^t (1-x) x (1-y) y (1-z) z \\
		& \quad +  e^t \mleft(\mleft(x+\tfrac{1}{3}\mright)^2+\mleft(y+\tfrac{1}{4}\mright)^2+\mleft(z+\tfrac{1}{2}\mright)^2\mright).
	\end{split}
\end{equation*}
For both \cref{eqn:2d_heat_eq,eqn:3d_heat_eq}, we discretize spatial derivatives with second order central finite differences on a uniform mesh with $N_p$ points in each direction.  Given the exact solutions are quadratic in space, this ensures there are no spatial errors. \added[id=1]{The 2D problem is partitioned such that $f^{\{1\}}$  is the discretization of the directional derivative along the $x$-axis and $f^{\{2\}}$ discretizes $u_{yy} +h(x,y,t)$. Similarly, in the 3D problem, $f^{\{1\}}$ and  $f^{\{2\}}$ are finite difference operators for $u_{xx}$ and $u_{yy}$, respectively, while $f^{\{3\}} $ represents the remaining terms $u_{zz} + g(x,y,z,t)$.} The integration timespan is $[0, 1]$, and temporal error is measured in the $\ell^2$ norm with respect to the exact solution at \added[id=1]{$t=1$ evaluated at} the mesh points.

\Cref{fig:ADI_2D_conv,fig:ADI_3D_conv} show convergence plots for the  methods documented in \cref{sec:new_methods} applied to the 2D and 3D problems respectively when the ADI structure in \cref{definition:ADI-GARk} is considered. The uniform mesh and the singly-diagonally implicit structure of the method  allows us to use the same matrix factorization when computing different directional stages. For small values of $N_p$  the methods work at their nominal order of convergence. However, numerical convergence rates in \cref{fig:order3-conv-2D,fig:order4-conv-2D,fig:order4-conv-3D} indicate that as the mesh size gets smaller, the problem becomes stiffer and numerical order reduction is observed. \added[id=2]{An order reduction in the classical convergence order of one-step methods when applied to PDE problems with time dependent boundary conditions is generally present. This can be seen in the papers by Ostermann and Roche \cite{Roche92} and by Lubich and Ostermann \cite{Lubich95,Lubich1995-Convergence}, where it is shown that for Runge--Kutta methods and Rosenbrock-type methods an order reduction takes place even in the case of time-independent boundary conditions. The order reduction is more dramatic for the case of time-dependent BCs. Besides in case of splitting methods (e.g. the GARK methods here considered) the effects of the order reduction are  quite often more pronounced due to the effect of the additional errors introduced in the splitting terms.  See, e.g. \cite{Gonzalez_2020_AMF}, where some convergence results  about the PDE-order (in several $L_p$ norms) of some splitting W-methods are given.}

\Cref{fig:P_ADI_2D_conv} shows convergence results for methods of orders 3 and 4 when the parallel ADI scheme described in \cref{definition:P-ADI-GARk} is used on the 2D problem \cref{eqn:2d_heat_eq}. This scheme has the added computational benefit that directional stages at the same time argument $t_{n}+ c_i h $ can be computed in parallel. For the set of mesh sizes used in this experiment, we observe the classical order of convergence for the  methods.
\begin{figure}[tbhp]
	\centering
	\begin{subfigure}[t]{.48\linewidth}
		\begin{tikzpicture}
			\begin{loglogaxis}[height=2in, grid=major, xlabel={Steps}, ylabel={Error}, 
			legend entries={ $N_p=16$\, Order 3.0,  $N_p=32$\, Order 2.9,
				$N_p=64$\, Order 2.6}]
			\addplot table [x index=0, y index=1,col sep=comma] {\datadir/2D/ADI_RK3_opt_2D.txt};
			\addplot table [x index=0, y index=2,col sep=comma] {\datadir/2D/ADI_RK3_opt_2D.txt};
			\addplot table [x index=0, y index=3,col sep=comma] {\datadir/2D/ADI_RK3_opt_2D.txt};
			\end{loglogaxis}
		\end{tikzpicture}
		\caption{Order 3 method }
        \label{fig:order3-conv-2D}
	\end{subfigure} \hfill
	\begin{subfigure}[t]{.48\linewidth}
		\begin{tikzpicture}
			\begin{loglogaxis}[height=2in, grid=major, xlabel={Steps}, ylabel={Error},
			legend entries={$N_p=16\quad$\, Order 4.0,  $N_p=32\quad$\, Order 3.9,
			$N_p=64\quad$\, Order 3.6}]
			\addplot table [x index=0, y index=1,col sep=comma] {\datadir/2D/ADI_RK4_opt_N16.txt};
			\addplot table [x index=0, y index=2,col sep=comma] {\datadir/2D/ADI_RK4_opt_2D.txt};
			\addplot table [x index=0, y index=3,col sep=comma] {\datadir/2D/ADI_RK4_opt_2D.txt};
			\end{loglogaxis}
		\end{tikzpicture}
		\caption{Order 4 method }
        \label{fig:order4-conv-2D}
	\end{subfigure} \hfill
	\caption{Convergence plots for ADI-GARK methods on the 2D test problem \cref{eqn:2d_heat_eq}.}
	\label{fig:ADI_2D_conv}
\end{figure}
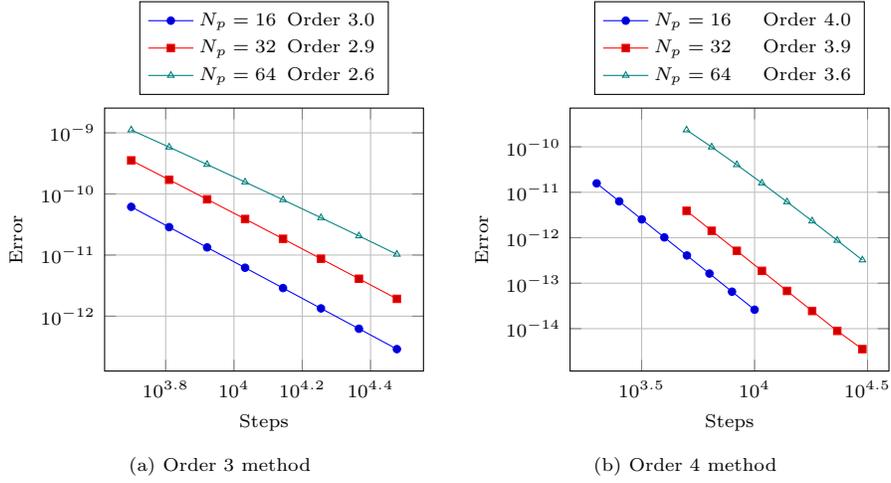
\begin{figure}[tbhp]
    \centering
    \begin{subfigure}[t]{.48\linewidth}
        \begin{tikzpicture}
        \begin{loglogaxis}[height=2in, grid=major, xlabel={Steps}, ylabel={Error},
        legend entries={$N_p=8\quad$ Order 3.0,  $N_p=16\quad$  Order 3.0,  $N_p=32\quad$  Order 3.0,
            $N_p=64\quad$, Reference slope 3}]
        \addplot table [x index=0, y index=1] {\datadir/3D/ADI_RK3_3D.txt};
        \addplot table [x index=0, y index=2] {\datadir/3D/ADI_RK3_3D.txt};
        \addplot table [x index=0, y index=3] {\datadir/3D/ADI_RK3_3D.txt};
        \draw[dashed] (axis cs:5.0e3,5e-3) -- node[below]{3} (axis cs:2*5.0e3, 5/8*1e-3);
        \end{loglogaxis}
        \end{tikzpicture}
        \caption{Order 3 method }
        \label{fig:order3-conv-3D}
    \end{subfigure} \hfill
    \begin{subfigure}[t]{.48\linewidth}
        \begin{tikzpicture}
        \begin{loglogaxis}[height=2in, grid=major, xlabel={Steps}, ylabel={Error},
        legend entries={$N_p=8\quad$\, Order 3.9,  $N_p=16\quad$\, Order 3.7,  $N_p=32\quad$\, Order 3.2,
            $N_p=64\quad$}]
        \addplot table [x index=0, y index=1] {\datadir/3D/ADI_RK4_3D.txt};
        \addplot table [x index=0, y index=2] {\datadir/3D/ADI_RK4_3D.txt};
        \addplot table [x index=0, y index=3] {\datadir/3D/ADI_RK4_3D.txt};
        \end{loglogaxis}
        \end{tikzpicture}
        \caption{Order 4 method }
        \label{fig:order4-conv-3D}
    \end{subfigure} \hfill
    \caption{Convergence plots for ADI-GARK methods on the 3D test problem \cref{eqn:3d_heat_eq}.}
    \label{fig:ADI_3D_conv}
\end{figure}
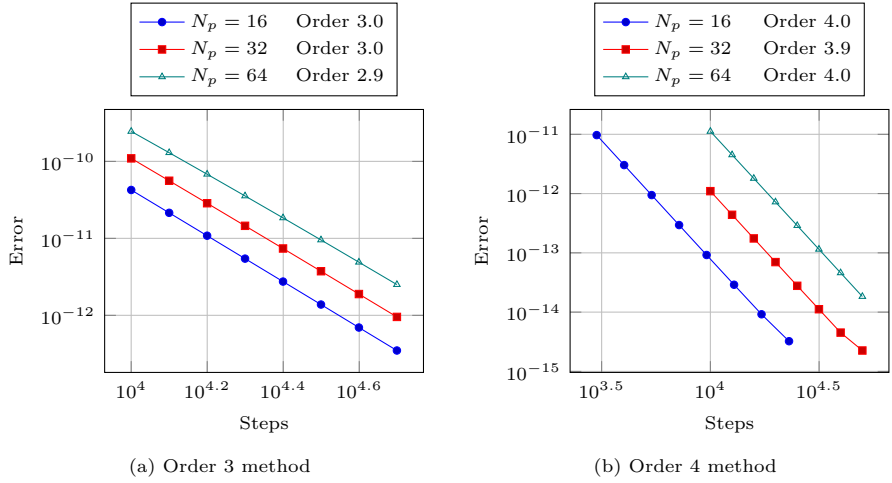
\begin{figure}[tbhp]
    \centering
    \begin{subfigure}[t]{.48\linewidth}
        \begin{tikzpicture}
        \begin{loglogaxis}[height=2in, grid=major, xlabel={Steps}, ylabel={Error},
        legend entries={  $N_p=16\quad$ Order 3.0,  $N_p=32\quad$ Order 3.0,
            $N_p=64\quad$ Order 2.9, Reference slope 3}]
        \addplot table [x index=0, y index=2] {\datadir/Parallel/P_RK3_opt_2D.txt};
        \addplot table [x index=0, y index=3] {\datadir/Parallel/P_RK3_opt_2D.txt};
        \addplot table [x index=0, y index=4] {\datadir/Parallel/P_RK3_opt_2D.txt};
        \end{loglogaxis}
        \end{tikzpicture}
        \caption{Order 3 method }
        \label{fig:order3-conv-P}
    \end{subfigure} \hfill
    \begin{subfigure}[t]{.48\linewidth}
        \begin{tikzpicture}
        \begin{loglogaxis}[height=2in, grid=major, xlabel={Steps}, ylabel={Error},
        legend entries={  $N_p=16\quad$\, Order 4.0,  $N_p=32\quad$\, Order 3.9,
            $N_p=64\quad$\, Order 4.0}]
        \addplot table [x index=0, y index=1] {\datadir/Parallel/RK4_N16_2D.txt};
        \addplot table [x index=0, y index=3] {\datadir/Parallel/P_RK4_opt_2D.txt};
        \addplot table [x index=0, y index=4] {\datadir/Parallel/P_RK4_opt_2D.txt};
        \end{loglogaxis}
        \end{tikzpicture}
        \caption{Order 4 method }
    \end{subfigure} \hfill
    \caption{Convergence plots for parallel ADI-GARK methods on the 2D test problem \cref{eqn:2d_heat_eq}.}
    \label{fig:P_ADI_2D_conv}
    \label{fig:order4-conv-P}
\end{figure}
%
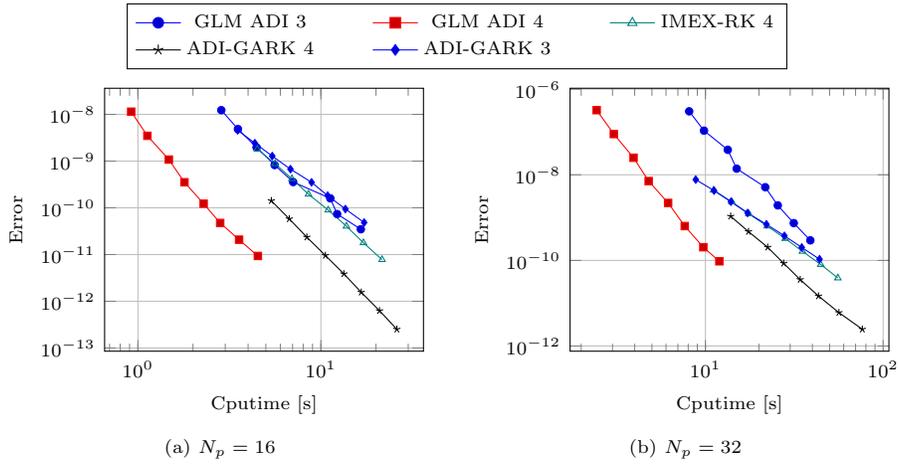
\begin{figure}[tbhp]
	\centering
	\pgfplotslegendfromname{leg:ADI_2D_timing}
	
	\begin{subfigure}[t]{.48\linewidth}
		\begin{tikzpicture}
			\begin{loglogaxis}[height=2in, grid=major, xlabel={Cputime [s]}, ylabel={Error}, 
			legend entries={GLM ADI 3, GLM ADI 4, IMEX-RK 4, ADI-GARK 4, ADI-GARK 3 },
			legend style={legend columns=3,/tikz/every even column/.append style={column sep=2em}},
			legend to name={leg:ADI_2D_timing}]
 			\addplot table [x index=5, y index=2,col sep=tab] {\datadir/Timing/GLM_ADI3.txt};
			\addplot table [x index=5, y index=2,col sep=tab] {\datadir/Timing/GLM_ADI4.txt};
 			\addplot table [x index=5, y index=2,col sep=tab] {\datadir/Timing/ADI_RK4_2D.txt};
 			\addplot table [x index=5, y index=2,col sep=tab] {\datadir/Timing/ADI_GARK4_2D.txt};
 			\addplot table [x index=5, y index=2,col sep=tab] {\datadir/Timing/ADI_GARK3_2D.txt};
			\end{loglogaxis}
		\end{tikzpicture}
		\caption{$N_p=16$}
        \label{fig:timing-order-3}
	\end{subfigure} \hfill
	\begin{subfigure}[t]{.48\linewidth}
		\begin{tikzpicture}
			\begin{loglogaxis}[height=2in, grid=major, xlabel={Cputime [s]}, ylabel={Error}]
 			\addplot table [x index=6, y index=3,col sep=tab] {\datadir/Timing/GLM_ADI3.txt};
			\addplot table [x index=6, y index=3,col sep=tab] {\datadir/Timing/GLM_ADI4.txt};
 			\addplot table [x index=6, y index=3,col sep=tab] {\datadir/Timing/ADI_RK4_2D.txt};
 			\addplot table [x index=6, y index=3,col sep=tab] {\datadir/Timing/ADI_GARK4_2D.txt};
 			\addplot table [x index=6, y index=3,col sep=tab] {\datadir/Timing/ADI_GARK3_2D.txt};
			\end{loglogaxis}
		\end{tikzpicture}
		\caption{$N_p=32$}
        \label{fig:timing-order-4}
	\end{subfigure} \hfill
	\caption{ Error versus  cputime for ADI-GARK methods on the 2D test problem \cref{eqn:2d_heat_eq} compared to GLM ADI methods from \cite{Sandu_2019_GLM-ADI} and an ADI method created from the IMEX pair reported in  \cite[Example 3]{Sandu_2015_GARK}.} 
	\label{fig:ADI_2D_timing}
\end{figure}
\added[id=2]{
We perform efficiency experiments using the new methods in \cref{sec:new_methods}. We compare the runtime and the error of the final solution against GLM-ADI methods reported in \cite{Sandu_2019_GLM-ADI}. In order to highlight the benefits of design strategies utilized for creating the new methods, we also compare them with a generic fourth order IMEX method from  \cite[Example 3]{Sandu_2015_GARK} used in ADI mode.  \Cref{fig:ADI_2D_timing} shows the  error versus cputime for $N_p \in \{ 16,32 \}$ on the 2D test problem \cref{eqn:2d_heat_eq}. We observe good performance for the ADI-GARK  methods, only being outperformed by the GLM-ADI 4. We also note that, owing to its optimized error, the ADI-GARK 3 method shows performance close to that of the IMEX-RK 4  method.}
\section{Conclusions}
\label{sec:conclusions}

This work introduces the implicit-implicit GARK family of methods in the general-structure additive Runge--Kutta framework. The IMIM-GARK family is of interest since it provides the general computational template for implicit time integration based on splitting.  Existing partitioned approaches such as fractional step, alternating direction implicit integration, operator splitting, and locally one dimensional integration, are formulated as IMIM-GARK methods. All these methods can be studied in a unified way using the order conditions and stability analyses provided herein. New splitting methods of (classical) order three and four with optimized stability and error constants are developed using the IMIM-GARK framework. Numerical experiments verify the accuracy \added[id=2]{and the efficiency} of these new schemes.  
	
	\bibliographystyle{model1b-num-names.bst}
	\bibliography{./Bib/main,./Bib/sandu}
	
	\appendix
	\section{GARK order conditions} \label{app:order_conditions}

General order conditions for partitioned GARK methods \cref{eqn:GARK} are described in \cite[Theorem 2.6]{Sandu_2015_GARK}.  Let
\begin{equation*}
	\A\comp{\sigma,\nu } \one{s\comp{\nu}}  = \c\comp{\sigma,\nu} \quad \forall \sigma,\nu,
\end{equation*}
\added[id=2]{and assume $\c\comp{\sigma} = \c\comp{\sigma,\sigma}$ for $\sigma = 1, \dots \nparts$.  This is weaker than the internal consistency condition \cref{eqn:internal-consistency}.}  Then the order condition up to order four are
\begin{subequations}
    \label{eqn:GARK-order-conditions}
	\begin{align}
		\b\comp{\sigma}*  \one{s\comp{\sigma}}  =  1 & \quad \forall \sigma,  & (\textnormal{order}~ 1) \label{eqn:order1} \\
		\b\comp{\sigma}*  \c\comp{\sigma,\nu}  =  \frac{1}{2}, & \quad \forall \sigma,\nu, & (\textnormal{order}~ 2) \\
		\b\comp{\sigma}*  \left(  \c\comp{\sigma,\nu} \times \c\comp{\sigma,\mu} \right)
		 =  \frac{1}{3}, & \quad \forall \sigma,\nu,\mu,   & (\textnormal{order}~ 3) \\
		\b\comp{\sigma}*  \A\comp{\sigma,\nu}   \c\comp{\nu,\mu} 
		 =  \frac{1}{6}, & \quad \forall \sigma,\nu,\mu,  & (\textnormal{order}~ 3)\\
		\b\comp{\sigma}*    \left( \c\comp{\sigma,\lambda} \times \c\comp{\sigma,\mu} \times \c\comp{\sigma,\nu} \right)
		=  \frac{1}{4}, & \quad \forall \lambda,\sigma,\nu,\mu,   & (\textnormal{order}~ 4)\\
		\left( \b\comp{\sigma} \times \c\comp{\sigma,\mu}  \right)^\tp  \A\comp{\sigma,\nu}     \c\comp{\nu,\lambda} 
		=  \frac{1}{8}, & \quad \forall \lambda,\sigma,\nu,\mu,  & (\textnormal{order}~ 4)\\
		\b\comp{\sigma}*  \A\comp{\sigma,\lambda}     \left(  \c\comp{\lambda,\mu} \times \c\comp{\lambda,\nu} \right)
		=  \frac{1}{12}, & \quad \forall \lambda,\sigma,\nu,\mu,  & (\textnormal{order}~ 4)\\
		\b\comp{\sigma}*  \A\comp{\sigma,\lambda}   \A\comp{\lambda,\nu}   \c\comp{\nu,\mu} 
		=  \frac{1}{24}, &\quad \forall \lambda,\sigma,\nu,\mu,  & (\textnormal{order}~ 4) 
	\end{align}
\end{subequations}
where $\lambda,\sigma,\nu,\mu \in \{1,2,\ldots,\nparts \}$.
	
	\ifreport
\section{Preserving Equilibria}

\begin{theorem}
	Consider an autonomous system with
	\begin{equation} \label{eqn:steady-state-f}
		\sum_{m=1}^{\nparts} f\comp{m}(y^*) =  0
	\end{equation}
	such that $y(t) = y^*$ is a constant solution to \cref{eqn:additive-ode}.  If a GARK method is internally consistent and first order accurate \cref{eqn:order1}, it will produce a numerical solution that stays at the steady state $y^*$.
\end{theorem}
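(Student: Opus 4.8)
The plan is to verify that the constant stage configuration $Y_i\comp{q} = y^*$ solves the stage equations \cref{eqn:GARK-stage} whenever $y_n = y^*$, and then to propagate this through the update \cref{eqn:GARK-solution}. First I would substitute the ansatz $Y_j\comp{m} = y^*$ into the right-hand side of \cref{eqn:GARK-stage}. Since the problem \cref{eqn:additive-ode} is autonomous, every internal evaluation collapses to $f\comp{m}_j = f\comp{m}(y^*)$, which is independent of the stage index $j$, so
\[
	Y_i\comp{q} = y^* + \Dt \sum_{m=1}^{\nparts} \Bigl( \sum_{j=1}^{s\comp{m}} a\comp{q,m}_{i,j} \Bigr) f\comp{m}(y^*) = y^* + \Dt \sum_{m=1}^{\nparts} c\comp{q,m}_i \, f\comp{m}(y^*).
\]
Internal consistency \cref{eqn:internal-consistency} replaces each $c\comp{q,m}_i$ by the single common abscissa $c\comp{q}_i$, which factors out of the sum to leave $\Dt\, c\comp{q}_i \sum_{m=1}^{\nparts} f\comp{m}(y^*)$; this vanishes by the steady-state hypothesis \cref{eqn:steady-state-f}. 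Hence $Y_i\comp{q} = y^*$ satisfies each stage equation exactly, for every step size.

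The second step is to argue that $Y_i\comp{q} = y^*$ is the stage vector the method actually computes, not merely a formal solution. Viewing the collection of stage equations as a fixed-point system $\mathbf{Y} = \Phi(\mathbf{Y}; \Dt)$, at $\Dt = 0$ the map reduces to $\mathbf{Y} = y_n \otimes \mathbbm{1}$ with Jacobian equal to the identity, so by the implicit function theorem the stage system has, for $|\Dt|$ small, a unique solution branch depending continuously on $\Dt$; since the constant configuration solves it for all $\Dt$, it is precisely that branch. (For the lower-triangular IMIM case of \cref{def:imim-gark} one may instead argue stage by stage, each scalar solve being uniquely solvable under the same small-step condition.) With all stages equal to $y^*$, evaluating \cref{eqn:GARK-solution} gives $y_{n+1} = y^* + \Dt \sum_{q=1}^{\nparts} \bigl( \sum_{i} b\comp{q}_i \bigr) f\comp{q}(y^*)$; the first-order condition \cref{eqn:order1} forces $\sum_{i} b\comp{q}_i = \b\comp{q}* \one{s\comp{q}} = 1$ for each $q$, so $y_{n+1} = y^* + \Dt \sum_{q=1}^{\nparts} f\comp{q}(y^*) = y^*$, again by \cref{eqn:steady-state-f}. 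An immediate induction on $n$ then keeps the numerical trajectory at $y^*$.

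The only genuine obstacle is the solvability and uniqueness of the implicit stage system, since the rest is direct substitution; I expect to dispatch it with the small-step-size argument above. Alternatively, the cleanest route avoids uniqueness altogether by phrasing the conclusion as ``$y^*$ is a fixed point of the one-step map,'' i.e.\ the constant sequence $y_n \equiv y^*$ is a legitimate trajectory of the scheme, which follows solely from the two substitution computations and requires no invertibility assumption.
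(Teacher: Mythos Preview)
Your proof is correct and follows the same substitution argument as the paper: verify that $Y_i\comp{q}=y^*$ satisfies the stage equations via internal consistency and \cref{eqn:steady-state-f}, then use \cref{eqn:order1} to conclude $y_{n+1}=y^*$. The paper's proof omits your uniqueness discussion entirely and implicitly adopts your ``cleanest route,'' simply asserting that the constant configuration is a solution of the stage system without addressing whether it is the computed one.
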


\begin{proof}
	Consider the nonlinear system from simultaneously solving all of the GARK stage equations \cref{eqn:GARK-stage}.  This admits the solution $Y\comp{q}_i = y^*$ for all $q =1,\dots,\nparts$ and $i=1,\dots,s\comp{q}$ since
	\begin{equation*}
		y^* + \Dt \sum_{m=1}^\nparts \sum_{j=1}^{s\comp{m}} a\comp{q,m}_{i,j} \, f\comp{m}(y^*)
		= y^* + \Dt \, c\comp{q}_i \sum_{m=1}^\nparts f\comp{m}(y^*)
		= y^*.
	\end{equation*}
	Now using \cref{eqn:order1}, we have that
	\begin{equation*}
		y_{n+1}
		= y^* + \Dt \sum_{q=1}^\nparts \sum_{i=1}^{s\comp{m}} b\comp{q}_{i} \, f\comp{q}(y^*)
		= y^* + \Dt \sum_{q=1}^\nparts f\comp{q}(y^*)
		= y^*.
	\end{equation*}
	\qed
\end{proof}

	\fi

\end{document}